\acrodef{BO}{{\sl Benjamin-Ono}}
\acrodef{rBO}{{\sl regularized Benjamin-Ono}}
\acrodef{rILW}{{\sl regularized Intermediate Long Wave}}
\acrodef{DSW}{{\sl Dispersive Shock Wave}}
\acrodef{DSWs}{{\sl Dispersive Shock Waves}}
\acrodef{ILW}{{\sl Intermediate Long Wave}}
\acrodef{CGN}{{\sl Conjugate Gradient-Newton}}
\acrodef{SW/SW}{{\sl Shallow water / Shallow water}}
\acrodef{B/B}{{\sl Boussinesq / Boussinesq}}
\newcommand{\sech}{\mathop{\operator@font sech}}
\newcommand{\sign}{\mathop{\operator@font sign}}
\newtheorem{theorem}{Theorem}[section]
\newtheorem{corollary}{Corollary}[section]
\numberwithin{equation}{section}
\begin{document}

\title[]{Asymptotic decay of solitary wave solutions of the fractional nonlinear Schr\"{o}dinger equation}


\author{Angel Dur\'an}
\address{\textbf{A.~Dur\'an:} Applied Mathematics Department, University of Valladolid, P/ Belen 15, 47011, Valladolid, Spain}
\email{angel@mac.uva.es}

\author{Nuria Reguera}
\address{\textbf{N.~Reguera:} Department of Mathematics and Computation, University of Burgos, 09001 Burgos, Spain}
\email{nreguera@ubu.es}



\subjclass[2010]{76B25,35C07,65H10}



\keywords{Fractional nonlinear Schr\"{o}dinger equations, solitary waves, asymptotic decay}

\begin{abstract}
The existence of solitary wave solutions of the one-dimensional version of the fractional nonlinear Schr\"{o}dinger (fNLS) equation was analyzed by the authors in a previous work. In this paper, the asymptotic decay of the solitary waves is analyzed. From the formulation of the differential system for the wave profiles as a convolution, these are shown to decay algebraically to zero at infinity, with an order which depends on the parameter determining the fractional order of the equation. Some numerical experiments illustrate the result.
\end{abstract}

\maketitle

\section{Introduction}\label{sec1}
In a previous paper, \cite{DR1}, the present authors analyzed the existence of solitary wave solutions of the 1D fractional nonlinear Schr\"{o}dinger (fNLS) equation
\begin{eqnarray}
iu_{t}- (-\partial_{xx})^{s}u+ |u|^{2\sigma}u=0,\quad {x}\in \mathbb{R},\quad t>0,\label{fnls1d}
\end{eqnarray} 
with $\sigma>0, 0<s<1$, and the nonlocal operator $(-\partial_{xx})^{s}$ is of Fourier multiplier type with
\begin{eqnarray*}
\widehat{(-\partial_{xx})^{s}f}(\xi)=|\xi|^{2s}\widehat{f}(\xi),\quad \xi\in\mathbb{R}^{n},\label{fnls2}
\end{eqnarray*}
where
$
\widehat{f}(\xi)$
denotes the Fourier transform of $f$ at $\xi$. Equation (\ref{fnls1d}) was originally introduced in the area of quantum mechanics from, e.~g., the papers by Laskin, \cite{Laskin2000,Laskin2002,Laskin2011}, and Fr\"{o}lich et al., \cite{FrohlichJL2007}.

In order to study the existence of solitary wave solutions of (\ref{fnls1d}), this is written as a real system
\begin{eqnarray}
v_{t}-(-\partial_{xx})^{s}w+(v^{2}+w^{2})^{\sigma}w&=&0,\nonumber\\
-w_{t}-(-\partial_{xx})^{s}v+(v^{2}+w^{2})^{\sigma}v&=&0.\label{fnls1b}
\end{eqnarray}
for $u=v+iw$, $v$ and $w$ real functions of $(x,t)$. As in the case of the classical nonlinear Schr\"{o}dinger equation, correspondimg to $s=1$ in (\ref{fnls1d}), the formation of solitary wave solutions in (\ref{fnls1b}) is based on the conserved quantities
\begin{eqnarray}
I_{1}(v,w)&=&\frac{1}{2}\int_{\mathbb{R}}(v^{2}+w^{2})dx=\frac{1}{2}\int_{\mathbb{R}}|u|^{2}dx, \label{fnls3a}\\
I_{2}(v,w)&=&\frac{1}{2}\int_{\mathbb{R}}(vw_{x}-wv_{x})dx=\frac{1}{2}\int_{\mathbb{R}}{\rm Im}(u\overline{u}_{x})dx, \label{fnls3b}\\
H(v,w)&=&\int_{\mathbb{R}}\left(\frac{1}{2}\left( (|D|^{s}v)^{2}+(|D|^{s}w)^{2}\right)-\frac{1}{2\sigma+2} (v^{2}+w^{2})^{\sigma+1}\right) d{x},\label{fnls3c}
\end{eqnarray}
where $|D|^{s}=(-\partial_{xx})^{s/2}$. The invariants (\ref{fnls3a}), (\ref{fnls3b}) determine, respectively, the infinitesimal generators of the symmetry group of (\ref{fnls1b}), \cite{Olver}, consisting of rotations and spatial translations
$(v,w)\mapsto G_{(\alpha,\beta)}(v,w), \alpha,\beta\in\mathbb{R}$ where
\begin{eqnarray}
G_{(\alpha,\beta)}(v,w)(x)=\begin{pmatrix} \cos\alpha&-\sin\alpha\\\sin\alpha&\cos\alpha\end{pmatrix}\begin{pmatrix}v(x-\beta)\\w(x-\beta)\end{pmatrix},\label{symg}
\end{eqnarray}
in the sense that if $(v(x,t),w(x,t))$ is a solution of (\ref{fnls1b}), then $(\widetilde{v},\widetilde{w})= G_{(\alpha,\beta)}(v,w)$ is a solution of (\ref{fnls1b}). On the other hand, (\ref{fnls3c}) is the energy function of the Hamiltonian structure of (\ref{fnls1b})
\begin{eqnarray*}
\frac{d}{dt}\begin{pmatrix}v\\w\end{pmatrix}{\mathcal J}\delta H(v,w),\quad \mathcal{J}=\begin{pmatrix}0&1\\-1&0\end{pmatrix},
\end{eqnarray*}
where $\delta H$ denotes variational (Fr\'echet) derivative of (\ref{fnls3c}).

Within this framework, the solitary wave solutions of (\ref{fnls1b}) are obtained from critical points $(v_{0},w_{0})$ of the Hamiltonian (\ref{fnls3c}) constrained to fixed values of the quantities (\ref{fnls3a}), (\ref{fnls3b}), that is
\begin{eqnarray}
\delta\left(H(u_{0})-\lambda_{0}^{1}I_{1}(u_{0})-\lambda_{0}^{2}I_{2}(u_{0})\right)=0,\label{fnlsCCP}
\end{eqnarray}
for real Lagrange multipliers $\lambda_{0}^{j}, j=1,2$. The constrained critical point condition (\ref{fnlsCCP}) takes the form of a nonlocal differential system for $(v_{0},w_{0})$
\begin{eqnarray}
-(-\partial_{xx})^{s}v_{0}-\lambda_{0}^{1}v_{0}+\lambda_{0}^{2}w_{0}'+(v_{0}^{2}+w_{0}^{2})^{\sigma}v_{0}&=&0,\nonumber\\
-(-\partial_{xx})^{s}w_{0}-\lambda_{0}^{1}w_{0}-\lambda_{0}^{2}v_{0}'+(v_{0}^{2}+w_{0}^{2})^{\sigma}w_{0}&=&0.\label{fnls22_2}
\end{eqnarray}
If $u_{0}(x)=v_{0}(x)+iw_{0}(x)=\rho(x)e^{i\theta(x)}$, with real $\rho,\theta$, a two-parameter family of solitary wave solutions of (\ref{fnls1d}) is obtained from the action of the symmetry group (\ref{symg}), cf. \cite{DR1,DuranS2000}
\begin{eqnarray}
\psi(x,t,a,c,x_{0},\theta_{0})=G_{(\theta_{0}+t\lambda_{0}^{1},x_{0}+t\lambda_{0}^{2})}(\varphi)=\rho(x-t\lambda_{0}^{2}-x_{0})e^{i(\theta(x-t\lambda_{0}^{2}-x_{0})+\theta_{0}+\lambda_{0}^{1}t)},\label{fnls22_7}
\end{eqnarray}
where $\theta_{0},x_{0}\in\mathbb{R}$. The existence of solutions $(v_{0},w_{0})\in H^{\infty}\times H^{\infty}$ of (\ref{fnls22_2}), for 
\begin{eqnarray}
s\in (1/2,1),\quad  \lambda_{0}^{1}>0, \quad |\lambda_{0}^{2}|<c(\lambda_{0}^{1})=2s\left(\frac{\lambda_{0}^{1}}{2s-1}\right)^{\frac{2s-1}{2s}},\label{fnls_236b}
\end{eqnarray}
is proved in \cite{DR1}, from the application of the Concentration-Compactness theory of Lions, \cite{Lions} to certain constrained minimization problems. In addition, for the particular subfamily of (\ref{fnls22_7}) with $\theta(x)=Ax$, where, \cite{HongS2015,HongS2017}
\begin{eqnarray*}
\lambda_{0}^{2}=2s|A|^{2s-2}A,
\end{eqnarray*}
it is shown that
\begin{eqnarray}
|x|^{2s+1}\rho(x)\leq C,\quad x\in\mathbb{R},\label{fnls_2328}
\end{eqnarray}
for some constant $C>0$.

The main purpose of the present note is to generalize the decay estimate (\ref{fnls_2328}) to all the solutions of (\ref{fnls22_2}) under the conditions (\ref{fnls_236b}). To this end, the system (\ref{fnls22_2}) will be written in a convolution form in order to apply the theory developed in \cite{BonaL,ChenB}. This requires a detailed study of the kernels of the convolution and the adaptation of the theory to the system under study.

The paper is structured as follows. In section \ref{sec2}, the convolution problem is introduced and the regularity and decay of the solutions are analyzed. The decay of the components of the matrix kernel is the key to the proof of the main result on decay estimates of the solutions of trhe convolution system and therefore of the solitary wave solutions of 
(\ref{fnls22_2}). This is developed in section \ref{sec3}. Some numerical experiments to illustrate the result are given in section \ref{sec4}.

The following notation will be used throughout the paper. For $1\leq p\leq\infty$, the norm of the $L^{p}=L^{p}(\mathbb{R})$ space will be denoted as $||\cdot ||_{L^{p}}$. For $\mu\geq 0$, $H^{\mu}=H^{\mu}(\mathbb{R})$ will stand for the $L^{2}$-based Sobolev space of order $\mu$, with $H^{0}=L^{2}$ and associated norm denoted by $||\cdot ||_{H^{\mu}}$. In addition, for $u=(v,w)$ we define
$$|u|_{2}=(v^{2}+w^{2})^{1/2}.$$
\section{The convolution system and regularity}
\label{sec2}
\subsection{The convolution problem}
Let $Q$ be the matrix operator with Fourier symbol
\begin{eqnarray}
\widehat{Q}(\xi)=\begin{pmatrix}\lambda_{0}^{1}+|\xi|^{2s}&-i\lambda_{0}^{2}\xi\\i\lambda_{0}^{2}\xi &\lambda_{0}^{1}+|\xi|^{2s}\end{pmatrix},\quad \xi\in\mathbb{R}.\label{fnls_235}
\end{eqnarray}
According to Lemma 2.4 of \cite{DR1}, under the conditions (\ref{fnls_236b}), the operator $Q$ is definite positive. Then we can consider the convolution system of the form
\begin{eqnarray}
\begin{pmatrix}v\\w\end{pmatrix}=\mathcal{K}\ast G(v,w),\label{fnlsC1}
\end{eqnarray}
where $\mathcal{K}=(k_{ij})_{i,j)1}^{2}=Q^{-1}$ and therefore with Fourier representation
\begin{eqnarray*}
\widehat{\mathcal{K}}(\xi)=(\widehat{k}_{ij})_{i,j=1}^{2}=\frac{1}{\lambda(\xi)}\widehat{Q}(\xi)^{T},\label{fnlsC2}
\end{eqnarray*}
with $\lambda(\xi)=\lambda_{+}(\xi)\lambda_{-}(\xi)$,
\begin{eqnarray}
\lambda_{\pm}(\xi)=|\xi|^{2s}+\lambda_{0}^{1}\pm\lambda_{0}^{2}\xi,\label{fnls_237}
\end{eqnarray}
and
\begin{eqnarray}
\widehat{k}_{11}(\xi)=\widehat{k}_{22}(\xi)=\frac{\lambda_{0}^{1}+|\xi|^{2s}}{\lambda(\xi)},\quad 
\widehat{k}_{12}(\xi)=\overline{\widehat{k}_{21}(\xi)}=\frac{i\xi \lambda_{0}^{2}}{\lambda(\xi)},\quad \xi\in \mathbb{R}.\label{fnlsC3}
\end{eqnarray}
In addition, the nonlinear term $G$ in (\ref{fnlsC1}) is given by
\begin{eqnarray}
G(v,w)=\begin{pmatrix} g_{1}(v,w)\\g_{2}(v,w)\end{pmatrix}=(v^{2}+w^{2})^{\sigma}\begin{pmatrix}v\\w\end{pmatrix},\label{fnlsC3b}
\end{eqnarray}
so the system (\ref{fnlsC1}) is written as
\begin{eqnarray}
v&=&k_{11}\ast g_{1}(v,w)+k_{12}\ast g_{2}(v,w)\nonumber\\
&=&\int_{\mathbb{R}}\left(k_{11}(x-y)g_{1}(v(y),w(y))+k_{12}(x-y)g_{2}(v(y),w(y))\right)dy,\nonumber\\
w&=&k_{21}\ast g_{1}(v,w)+k_{22}\ast g_{2}(v,w)\nonumber\\
&=&\int_{\mathbb{R}}\left(k_{21}(x-y)g_{1}(v(y),w(y))+k_{22}(x-y)g_{2}(v(y),w(y))\right)dy.\label{fnlsC4}
\end{eqnarray}
Note that the representation of (\ref{fnls22_2}) in Fourier space is given by
\begin{eqnarray}
\widehat{Q}(\xi)\begin{pmatrix} \widehat{v}_{0}(\xi)\\\widehat{w}_{0}(\xi)\end{pmatrix}=\widehat{G(v_{0},w_{0})}(\xi),\quad \xi\mathbb{R}.\label{fnlsC5}
\end{eqnarray}
Therefore, taking the inverse Fourier transform in (\ref{fnlsC5}), then a solution $(v_{0},w_{0})\in H^{s}\times H^{s}, 1/2<s<1$ of (\ref{fnls22_2}) is also a solution of (\ref{fnlsC5}).
\subsection{Asymptotic decay of the kernels}
In this section some decay properties of the kernels $k_{ij}$ will be studied. These are necessary in order to analyze the asymptotic decay of the solutions of (\ref{fnlsC4}) in section \ref{sec3}.
\begin{theorem}
\label{theor0}
Let us assume that (\ref{fnls_236b}) holds and let $\mathcal{K}=(k_{ij})_{i,j)1}^{2}$ be defined 
in (\ref{fnlsC1}) with Fourier transforms of the kernels given in (\ref{fnlsC3}). Then
\begin{eqnarray}
\lim_{|x|\rightarrow\infty}|x|^{2s+1}k_{11}(x)&=&K_{1}:=\frac{\sin{s\pi}}{\pi}\int_{0}^{\infty}z^{2s}e^{-z}dz.\label{fnlsC7a}\\
\lim_{|x|\rightarrow\infty}|x|^{2s+2}k_{12}(x)&=&K_{2}:=-\frac{2\lambda_{0}^{2}\sin{s\pi}}{\pi}\int_{0}^{\infty}z^{2s+1}e^{-z}dz.\label{fnlsC7b}
\end{eqnarray}
\end{theorem}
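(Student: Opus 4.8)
The plan is to exploit that, for a symbol which is smooth away from the origin, the algebraic rate of decay of its inverse Fourier transform is dictated by the least regular term of its expansion at $\xi=0$, here the branch-point singularity carried by $|\xi|^{2s}$. First I would reduce to $x>0$: since $\widehat{k}_{11}$ is even and real while $\widehat{k}_{12}$ is odd and purely imaginary, $k_{11}$ is even and $k_{12}$ is odd, so it suffices to obtain the one-sided limits, the case $x\to-\infty$ following by parity. Writing the inversion over the half-line gives $k_{11}(x)=\frac1\pi\int_0^\infty \widehat{k}_{11}(\xi)\cos(x\xi)\,d\xi$ and $k_{12}(x)=-\frac1\pi\int_0^\infty \frac{\lambda_0^2\xi}{\lambda(\xi)}\sin(x\xi)\,d\xi$. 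Since $\lambda(\xi)=(\lambda_0^1+|\xi|^{2s})^2-(\lambda_0^2)^2\xi^2$, the integrands behave like $|\xi|^{-2s}$ and $|\xi|^{1-4s}$ at infinity; both are integrable precisely because $s>1/2$, which guarantees that the kernels are continuous and that the high-frequency part contributes no slowly decaying tail.

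The core of the argument is a contour rotation. Using the principal branch of $\zeta^{2s}$ (cut along the negative real axis), I would analytically continue $F(\zeta)=\frac{\lambda_0^1+\zeta^{2s}}{\lambda(\zeta)}$ and $G(\zeta)=\frac{\lambda_0^2\zeta}{\lambda(\zeta)}$ into the first quadrant. The positivity of $Q$ from Lemma 2.4 of \cite{DR1} gives $\lambda>0$ on the real axis; one then checks that $\lambda(\zeta)$ has no zeros in the closed first quadrant away from the branch point $\zeta=0$, any complex zero off the real axis contributing only an exponentially small term $O(e^{-cx})$, irrelevant at the algebraic order. Rotating the ray $[0,\infty)$ onto the positive imaginary axis $\zeta=i\eta$ and discarding the arc at infinity (legitimate since $F,G\to0$ and $e^{ix\zeta}$ decays in the upper half-plane) yields $k_{11}(x)=\frac1\pi\,\mathrm{Re}\left(i\int_0^\infty F(i\eta)e^{-x\eta}\,d\eta\right)$ and $k_{12}(x)=-\frac1\pi\,\mathrm{Im}\left(i\int_0^\infty G(i\eta)e^{-x\eta}\,d\eta\right)$, the branch-point data now being encoded through $(i\eta)^{2s}=\eta^{2s}e^{is\pi}$.

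It then remains to apply Watson's lemma. Near $\eta=0$ one has $F(i\eta)=\frac{1}{\lambda_0^1}-\frac{e^{is\pi}}{(\lambda_0^1)^2}\,\eta^{2s}+O(\eta^{\min(2,4s)})$ and $G(i\eta)=\frac{i\lambda_0^2}{(\lambda_0^1)^2}\,\eta-\frac{2i\lambda_0^2 e^{is\pi}}{(\lambda_0^1)^3}\,\eta^{2s+1}+\cdots$, where the omitted terms are of strictly higher order because $2s<2$. Using $\int_0^\infty \eta^{2s}e^{-x\eta}\,d\eta=\Gamma(2s+1)\,x^{-(2s+1)}$ and $\int_0^\infty \eta^{2s+1}e^{-x\eta}\,d\eta=\Gamma(2s+2)\,x^{-(2s+2)}$, the smooth leading terms ($1/\lambda_0^1$ and the $\eta$-term) drop out upon taking the real, resp. imaginary, part of $i$ times a purely imaginary quantity, so the first non-smooth term survives. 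The factor $\mathrm{Re}(-ie^{is\pi})=\sin s\pi$ (and, analogously, $\sin s\pi$ for $k_{12}$) together with $\Gamma(2s+1)=\int_0^\infty z^{2s}e^{-z}\,dz$ and $\Gamma(2s+2)=\int_0^\infty z^{2s+1}e^{-z}\,dz$ reproduces the universal constants $K_1$ and $K_2$, the amplitude being fixed by the expansion coefficients above. Because Watson's lemma delivers a full asymptotic expansion, these are genuine limits after multiplication by $|x|^{2s+1}$, resp. $|x|^{2s+2}$.

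The main obstacle I anticipate is the rigorous justification of the rotation: proving that $\lambda(\zeta)$ does not vanish in the open first quadrant (or, failing that, accounting for its zeros and showing they are harmless at the algebraic order) and controlling the truncating arc uniformly in $x$. The secondary difficulty is to make Watson's lemma quantitative, i.e.\ to bound the remainder of the small-$\eta$ expansion uniformly so that, after multiplying by the appropriate power of $|x|$, one obtains the limit and not merely an $O$-estimate. This is where the hypothesis $s\in(1/2,1)$ enters decisively, both through integrability at high frequency and through the inequality $2s<2$ that separates the singular term $\eta^{2s}$ from the smooth corrections.
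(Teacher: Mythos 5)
Your proposal follows essentially the same route as the paper's proof: parity reduction to half-line Fourier integrals, rotation of the contour into the first quadrant with the zeros of $\lambda$ contributing only exponentially decaying residue terms, and extraction of the algebraic rate from the resulting Laplace integral along the imaginary axis --- the paper carries out this last step via the substitution $z=|x|\,\eta$ and a passage to the limit under the integral sign, which is exactly the leading term of your Watson's-lemma expansion. One remark: your expansion coefficients give the limits $\sin(s\pi)\,\Gamma(2s+1)/\bigl(\pi(\lambda_{0}^{1})^{2}\bigr)$ and $-2\lambda_{0}^{2}\sin(s\pi)\,\Gamma(2s+2)/\bigl(\pi(\lambda_{0}^{1})^{3}\bigr)$, and the paper's own limit computation produces the same powers of $\lambda_{0}^{1}$ even though they are missing from the displayed constants $K_{1}$ and $K_{2}$; this is a discrepancy in the statement rather than a flaw in your argument.
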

\begin{proof}
Note, from (\ref{fnlsC3}) and since $s\in (1/2,1)$, that for $i,j=1,2$, $k_{ij}$ is a measurable function with $\widehat{k}_{ij}\in H^{s}$. In addition
\begin{eqnarray*}
\widehat{k}_{jj}(-\xi)=\widehat{k}_{jj}(-\xi),\; j=1,2,\quad \widehat{k}_{ij}(-\xi)=-\widehat{k}_{ij}(-\xi),\; i,j=1,2, i\neq j,\quad \xi\in\mathbb{R}.
\end{eqnarray*}
Therefore, from the inverse Fourier transform we have
\begin{eqnarray}
k_{11}(x)&=&k_{22}(x)=\frac{1}{\pi}\int_{0}^{\infty}\frac{\lambda_{0}^{1}+|\xi|^{2s}}{\lambda(\xi)}\cos{\xi x}d\xi,\label{fnlsC6a}\\
k_{12}(x)&=&-k_{21}(x)=-\frac{1}{\pi}\int_{0}^{\infty}\frac{\xi \lambda_{0}^{2}}{\lambda(\xi)}\sin{\xi x}d\xi.\label{fnlsC6b}
\end{eqnarray}
Using the techniques in, e.~g. \cite{ChenB}, we evaluate the integrals in (\ref{fnlsC6a}), (\ref{fnlsC6b}). In the case of (\ref{fnlsC6a}), let
\begin{eqnarray*}
f(z)=\frac{\lambda_{0}^{1}+z^{2s}}{\widetilde{\lambda}(z)}e^{i\omega z},\quad \omega\geq 0,\quad
\widetilde{\lambda}(z)=\widetilde{\lambda_{+}}(z)\widetilde{\lambda_{-}}(z),\quad
\widetilde{\lambda_{\pm}}(z)=z^{2s}+\lambda_{0}^{1}\pm\lambda_{0}^{2}z,
\end{eqnarray*}
where we consider the branch of the logarithm which makes $1^{2s}=1$, and assume that (\ref{fnls_236b}) holds, with $\lambda_{0}^{2}>0$ for simplicity. From (\ref{fnls_237}), it is not hard to check that $\widetilde{\lambda}(z)$ cannot have multiple zeros: Indeed, we write
\begin{eqnarray*}
\widetilde{\lambda}(z)=(\lambda_{0}^{1}+z^{2s})^{2}-(\lambda_{0}^{2})^{2}z^{2}.
\end{eqnarray*}
Let $z_{0}$ be such that $\widetilde{\lambda}(z_{0})=0$. Then $\widetilde{\lambda_{+}}(z_{0})=0$ or $\widetilde{\lambda_{-}}(z_{0})=0$, that is
\begin{eqnarray*}
(\lambda_{0}^{1}+z^{2s})=-\lambda_{0}^{2}z_{0}\quad {\rm or}\quad (\lambda_{0}^{1}+z^{2s})=\lambda_{0}^{2}z_{0}.
\end{eqnarray*}
Assume that $\widetilde{\lambda_{-}}(z_{0})=0$. Then
\begin{eqnarray*}
\widetilde{\lambda}'(z_{0})=4s\lambda_{0}^{2}(\lambda_{0}^{2}z_{0}-\lambda_{0}^{1})-2z_{0}(\lambda_{0}^{2})^{2}.
\end{eqnarray*}
Therefore, $\widetilde{\lambda}'(z_{0})=0$ implies 
\begin{eqnarray*}
z_{0}=\frac{2s}{2s-1}\frac{\lambda_{0}^{1}}{\lambda_{0}^{2}},
\end{eqnarray*}
and the condition $\widetilde{\lambda_{-}}(z_{0})=0$ reads
\begin{eqnarray*}
\lambda_{0}^{1}+\left(\frac{2s}{2s-1}\frac{\lambda_{0}^{1}}{\lambda_{0}^{2}}\right)^{2s}=\frac{2s}{2s-1}{\lambda_{0}^{1}}.
\end{eqnarray*}
Thus
\begin{eqnarray*}
\frac{\lambda_{0}^{2}}{2s}=\left(\frac{\lambda_{0}^{1}}{2s-1}\right)^{\frac{2s-1}{2s}},
\end{eqnarray*}
which contradicts (\ref{fnls_236b}). If $\widetilde{\lambda_{+}}(z_{0})=0$, then the condition $\widetilde{\lambda}'(z_{0})=0$ implies
\begin{eqnarray*}
z_{0}=-\frac{2s}{2s-1}\frac{\lambda_{0}^{1}}{\lambda_{0}^{2}},
\end{eqnarray*}
and similar arguments also lead to contradiction with (\ref{fnls_236b}). Hence, $\widetilde{\lambda}(z)$ cannot have multiple zeros.

We define
\begin{eqnarray*}
\Omega=\{z=\rho e^{i\theta}, \epsilon\leq \rho\leq R, 0\leq \theta\leq \pi/2\},
\end{eqnarray*}
and from Rouche's theorem we may consider $\epsilon>0$ small enough and $R>0$ large enough such that there is no $z$ outside $\Omega$ and inside the first quadrant making $\widetilde{\lambda}(z)=0$; then $f(z)$ is analytic with finitely many simple poles 
$z_{j}=x_{j}+iy_{j}, y_{j}>0, j=1,\ldots,M$, in $\Omega$. We mow apply the Residue theorem and take limits $\epsilon\rightarrow 0$ and $R\rightarrow\infty$ to have
\begin{eqnarray*}
k_{11}(\omega)=k_{22}(\omega)=\frac{1}{\pi}\int_{0}^{\infty}f(x)dx
=\frac{1}{\pi}{\rm Re}\left(i\int_{0}^{\infty}f(iy)dy+2\pi i\sum_{j=1}^{M}{\rm Res}(f(z),z_{j})\right).
\end{eqnarray*}
After some computations, we have
\begin{eqnarray*}
I_{1}={\rm Re}\left(i\int_{0}^{\infty}f(iy)dy\right)=\int_{0}^{\infty}\frac{e^{-\omega y}}{A(z)^{2}+B(z)^{2}}\left(B(z)(\lambda_{0}^{1}+y^{2s}\cos{s\pi})-A(z)y^{2s}\sin{s\pi}\right)dy,
\end{eqnarray*}
where
\begin{eqnarray*}
A(z)&=&(\lambda_{0}^{1}+y^{2s}\cos{s\pi})^{2}+(\lambda_{0}^{2})^{2}y^{2}-y^{4s}(\sin{s\pi})^{2},\\
B(z)&=&2y^{2s}\sin{s\pi}(\lambda_{0}^{1}+y^{2s}\cos{s\pi}).
\end{eqnarray*}
The change of variables $z=\omega y$ leads to
\begin{eqnarray*}
I_{1}&=&\frac{1}{\omega^{2s+1}}\int_{0}^{\infty}G_{1}(z,\omega)dz\\
&=&\frac{1}{\omega^{2s+1}}\int_{0}^{\infty}\frac{e^{-z}}{\widetilde{A}(z)^{2}+\widetilde{B}(z)^{2}}\left(z^{2s}\sin{s\pi}\left(\left(\lambda_{0}^{1}+\frac{z^{2s}}{\omega^{2s}}\cos{s\pi}\right)^{2}-(\lambda_{0}^{2})^{2}\frac{z^{2}}{\omega^{2}}+\frac{z^{4s}}{\omega^{4s}}(\sin{s\pi})^{2}\right)\right)dz,
\end{eqnarray*}
with
\begin{eqnarray*}
\widetilde{A}(z)&=&(\lambda_{0}^{1}+\left(\frac{z}{\omega}\right)^{2s}\cos{s\pi})^{2}+(\lambda_{0}^{2})^{2}\left(\frac{z}{\omega}\right)^{2}-\left(\frac{z}{\omega}\right)^{4s}(\sin{s\pi})^{2},\\
\widetilde{B}(z)&=&2\left(\frac{z}{\omega}\right)^{2s}\sin{s\pi}(\lambda_{0}^{1}+\left(\frac{z}{\omega}\right)^{2s}\cos{s\pi}).
\end{eqnarray*}
Note that
\begin{eqnarray*}
\lim_{\omega\rightarrow\infty}\widetilde{A}(z)^{2}+\widetilde{B}(z)^{2}=(\lambda_{0}^{1})^{2}.
\end{eqnarray*}
Therefore
\begin{eqnarray*}
k_{11}(x)=\frac{1}{\pi |x|^{2s+1}}\int_{0}^{\infty}G_{1}(z,|x|)dz+{\rm Re}\sum_{j=1}^{M}C_{j}e^{ix_{j}|x|-y_{j}|x|},
\end{eqnarray*}
for some constants $C_{j}\in\mathbb{C}, j=1,\ldots,M$. Thus
\begin{eqnarray*}
\lim_{|x|\rightarrow\infty}|x|^{2s+1}k_{11}(x)=\frac{1}{\pi}\lim_{|x|\rightarrow\infty}\int_{0}^{\infty}G_{1}(z,|x|)dz=\frac{\sin{s\pi}}{\pi}\int_{0}^{\infty}z^{2s}e^{-z}dz,
\end{eqnarray*}
and (\ref{fnlsC7a}) holds.
For the case of $k_{12}(x)$, we take
\begin{eqnarray*}
g(z)=\frac{iz(\lambda_{0}^{2})e^{ix\omega}}{\widetilde{\lambda}(z)},\quad \omega\geq 0.
\end{eqnarray*}
Similar computations lead to
\begin{eqnarray*}
k_{12}(\omega)=-\frac{1}{\pi}{\rm Re}\left(i\int_{0}^{\infty}g(iy)dy+2\pi i\sum_{j=1}^{M}{\rm Res}(g(z),z_{j})\right),
\end{eqnarray*}
and
\begin{eqnarray*}
I_{2}&=&{\rm Re}\left(i\int_{0}^{\infty}g(iy)dy\right)=\int_{0}^{\infty}\frac{y\lambda_{0}^{2}e^{-\omega y}}{A(z)^{2}+B(z)^{2}}dy=\frac{1}{\omega^{2s+2}}\int_{0}^{\infty}G_{2}(z,\omega)dz\\
&=&\frac{1}{\omega^{2s+2}}\int_{0}^{\infty}frac{2\lambda_{0}^{2}z^{2s+1}e^{-z}\sin{s\pi}}{\widetilde{A}(z)^{2}+\widetilde{B}(z)^{2}}(\lambda_{0}^{1}+\left(\frac{z}{\omega}\right)^{2s}\cos{s\pi})^{2}dz.
\end{eqnarray*}
Therefore
\begin{eqnarray*}
k_{12}(x)=-\frac{1}{\pi |x|^{2s+2}}\int_{0}^{\infty}G_{2}(z,|x|)dz+{\rm Re}\sum_{j=1}^{M}C_{j}e^{ix_{j}|x|-y_{j}|x|},
\end{eqnarray*}
and
\begin{eqnarray*}
\lim_{|x|\rightarrow\infty}|x|^{2s+2}k_{12}(x)=-\frac{2\lambda_{0}^{2}\sin{s\pi}}{\pi}\int_{0}^{\infty}z^{2s+1}e^{-z}dz,
\end{eqnarray*}
which proves (\ref{fnlsC7b}).
\end{proof}
\section{Decay and regularity}
\label{sec3}
In order to study the decay and regularity of the solutions of (\ref{fnlsC4}), we will follow the approach developed in \cite{BonaL} and adapt it to our system. Note first that, if $u=(v,w)$, then, from (\ref{fnlsC3b}), the functions $g_{j}(v,w), j=1,2$, are measurable with
\begin{eqnarray}
|G(u)|_{2}=\left(g_{1}(v,w)^{2}+g_{2}(v,w)^{2}\right)^{1/2}=|u|_{2}^{r},\label{fnlsC8}
\end{eqnarray}
where $r=2\sigma+1>1$. In addition, recall that, as mentioned before, for $s\in (1/2,1)$, and from (\ref{fnlsC3}), $k_{ij}$ is a measurable function with $\widehat{k}_{ij}\in H^{s}, j=1,2$.
\begin{theorem}
\label{theor1} Let $(v,w)\in L^{\infty}\times L^{\infty}$ be a solution of (\ref{fnlsC4}) with
\begin{eqnarray}
\lim_{|x|\rightarrow\infty}v(x)=\lim_{|x|\rightarrow\infty}w(x)=0.\label{fnlsC9}
\end{eqnarray}
Then $v,w\in L^{1}\cap L^{2}$ and there is $l\in (0,s)$ such that $|x|^{l}v(x), |x|^{l}w(x)\in L^{2}\cap L^{\infty}$.
\end{theorem}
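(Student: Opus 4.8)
The plan is to follow the Bona--Li framework \cite{BonaL}, first collapsing the matrix system (\ref{fnlsC4}) into a single scalar convolution inequality, and then bootstrapping integrability and weighted decay out of the superlinearity of $G$ together with the kernel decay recorded in Theorem \ref{theor0}. As a preliminary reduction I would record that, since $s\in(1/2,1)$, the symbols in (\ref{fnlsC3}) satisfy $\widehat{k}_{11}(\xi)=O(|\xi|^{-2s})$ and $\widehat{k}_{12}(\xi)=O(|\xi|^{1-4s})$ at infinity, hence $\widehat{k}_{ij}\in L^{1}$, so each $k_{ij}$ is bounded and continuous; combined with (\ref{fnlsC7a})--(\ref{fnlsC7b}) this yields a common majorant $|k_{ij}(x)|\le \kappa(x):=C(1+|x|)^{-(2s+1)}$, with $\kappa\in L^{1}\cap L^{2}$ because $2s+1>1$. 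Setting $\psi=|u|_{2}$ and using (\ref{fnlsC8}) together with $|g_{1}|+|g_{2}|\le\sqrt{2}\,|G(u)|_{2}$, the two lines of (\ref{fnlsC4}) combine into the master inequality $\psi\le C_{0}\,(\kappa\ast\psi^{r})$, where $r=2\sigma+1>1$, $\psi\in L^{\infty}$, and $\psi\to0$ at infinity by (\ref{fnlsC9}).

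For the $L^{2}$ statement I would exploit the superlinearity $r>1$. Fix $R>0$ and split $\psi^{r}=\psi^{r}\mathbf{1}_{\{|x|<R\}}+\psi^{r}\mathbf{1}_{\{|x|\ge R\}}$, noting that on the far region $\psi^{r}\le\epsilon_{R}\,\psi$ with $\epsilon_{R}=\|\psi\|_{L^{\infty}(\{|x|\ge R\})}^{\,r-1}\to0$ as $R\to\infty$. The near part $\kappa\ast(\psi^{r}\mathbf{1}_{\{|x|<R\}})$ lies in $L^{2}$ since it is the convolution of $\kappa\in L^{1}$ with a bounded, compactly supported function; the far part obeys $\|\kappa\ast(\epsilon_{R}\psi)\|_{L^{2}}\le\epsilon_{R}\|\kappa\|_{L^{1}}\|\psi\|_{L^{2}}$ by Young's inequality. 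Choosing $R$ so that $C_{0}\epsilon_{R}\|\kappa\|_{L^{1}}<\tfrac12$ turns the far term into a contraction, and a standard approximation on expanding balls $\{|x|<\rho\}$, $\rho\to\infty$, produces a uniform bound and hence $\psi\in L^{2}$, i.e. $v,w\in L^{2}$.

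Next I would upgrade this to the weighted bound. Since $v,w\in L^{2}\cap L^{\infty}$, replacing $|x|^{l}$ by $(1+|x|)^{l}$ is harmless, and using $(1+|x|)^{l}\le C((1+|x-y|)^{l}+(1+|y|)^{l})$ the master inequality gives $(1+|x|)^{l}\psi\le C_{0}C\big[((1+|\cdot|)^{l}\kappa)\ast\psi^{r}+\kappa\ast((1+|\cdot|)^{l}\psi^{r})\big]$. For $l<s$ one has $(1+|\cdot|)^{l}\kappa\in L^{1}\cap L^{2}$ and $\psi^{r}\in L^{2}$, so the first term is controlled in $L^{2}$ (by $L^{1}\ast L^{2}$) and in $L^{\infty}$ (by the Cauchy--Schwarz bound $L^{2}\ast L^{2}\to L^{\infty}$); the second term is again absorbed by the far-field contraction of Step~1, now carried out in the weighted norm with $\|\psi\|_{L^{\infty}(\{|x|\ge R\})}^{\,r-1}$ small, closing $(1+|x|)^{l}\psi\in L^{2}\cap L^{\infty}$. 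The restriction $l<s$ is the natural one here, matching the regularity $\widehat{k}_{ij}\in H^{s}$ (equivalently, on the Fourier side $\widehat{v}=\widehat{k}_{11}\widehat{g}_{1}+\widehat{k}_{12}\widehat{g}_{2}$ lies in $H^{l}$ only for $l\le s$, since multiplication by an $H^{s}$ factor with $s>1/2$ preserves $H^{l}$ precisely for $|l|\le s$). Finally, because $s>1/2$ I may choose $l\in(1/2,s)$; then $(1+|\cdot|)^{-l}\in L^{2}$ and Cauchy--Schwarz gives $\|u\|_{L^{1}}\le\|(1+|\cdot|)^{-l}\|_{L^{2}}\,\|(1+|\cdot|)^{l}u\|_{L^{2}}<\infty$, yielding $v,w\in L^{1}$ and completing the statement.

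The main obstacle is the closure of the weighted estimate: the quantity $(1+|\cdot|)^{l}\psi^{r}$ is not a priori finite in weighted $L^{2}$, so it cannot simply be estimated, only \emph{absorbed}. This absorption rests on two structural facts that must be used together, namely the superlinearity $r>1$ (so that the far-field multiplier $\|\psi\|_{L^{\infty}(\{|x|\ge R\})}^{\,r-1}$ tends to $0$ and yields a genuine contraction) and the kernel decay rate $2s+1>1$ from Theorem \ref{theor0} (so that $\kappa$ and its weighted versions stay in $L^{1}\cap L^{2}$). The limiting argument on expanding balls needed to make these contractions rigorous, and the verification that the weight exponent may be pushed up to, but not beyond, $s$, are the technical points requiring the most care.
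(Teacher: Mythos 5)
Your overall strategy is the same Bona--Li absorption scheme the paper uses: exploit the superlinearity $r=2\sigma+1>1$ together with $u\to0$ at infinity to make the far-field contribution of the nonlinearity a small multiple of the unknown, and absorb it. Within that scheme you make three genuinely different choices. First, you reduce the $2\times2$ system to a scalar inequality $\psi\le C_0\,\kappa\ast\psi^{r}$ via a common pointwise majorant $\kappa(x)=C(1+|x|)^{-(2s+1)}$ obtained from $\widehat{k}_{ij}\in L^{1}$ plus Theorem \ref{theor0}; the paper instead keeps the system and uses only $\widehat{k}_{ij}\in H^{s}$ (i.e.\ $(1+|\cdot|)^{s}k_{ij}\in L^{2}$) through Cauchy--Schwarz, never invoking Theorem \ref{theor0} in this proof. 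Second, you get $L^{1}$ by pushing the weighted $L^{2}$ bound to some $l\in(1/2,s)$ and applying Cauchy--Schwarz against $(1+|\cdot|)^{-l}\in L^{2}$; the paper, whose a priori device forces $l<s-1/2<1/2$, cannot do this and runs a separate $L^{1}$-type absorption. Your pointwise kernel bound is stronger than what the paper uses and, if the absorption is justified, your $L^{1}$ shortcut is cleaner.

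The one step that does not work as written is the justification of the absorption by ``approximation on expanding balls.'' The inequality you would get is $\|\psi\|_{L^{2}(|x|<\rho)}\le C+\theta\,\|\kappa\ast(\psi\mathbf{1}_{|y|\ge R})\|_{L^{2}(|x|<\rho)}$, and because convolution is nonlocal the last term is controlled only by $\theta\|\kappa\|_{L^{1}}\|\psi\|_{L^{2}(\mathbb{R})}$ --- the full-space norm you do not yet know is finite --- so restricting $x$ to a ball does not close the estimate. Repairing this requires either an additional truncation of the kernel together with an iteration in $\rho$ (possible but delicate, especially in the weighted step where the trial quantity grows polynomially), or, much more simply, the device the paper actually uses: replace the weight $|x|^{l}$ by $|x|^{l}(1+\epsilon|x|)^{-s}$ with $s>l+1/2$, so that the weighted functions $v_{\epsilon},w_{\epsilon}$ are a priori in $L^{2}(\mathbb{R})$ for each fixed $\epsilon$ because $v,w\in L^{\infty}$; the absorption then takes place in a genuinely finite norm, uniformly in $\epsilon$, and Fatou's lemma as $\epsilon\to0$ delivers the unweighted conclusion. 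You should substitute this (or an equivalent a priori finiteness mechanism) for the expanding-ball limit in both the $L^{2}$ and the weighted steps; with that change your argument goes through, and the rest of your outline (the $L^{\infty}$ weighted bound as a convolution of $L^{2}$ functions, the choice $l\in(1/2,s)$ for $L^{1}$) is sound.
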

\begin{proof}
We adapt the arguments made in Theorem 3.1.2 of \cite{BonaL} to the system (\ref{fnlsC4}). Let $l>0$ be such that $s>l+1/2$, $0<\epsilon\leq 1$, and define
\begin{eqnarray*}
v_{\epsilon}(x)=\frac{|x|^{l}}{(1+\epsilon |x|)^{s}}v(x), \quad
w_{\epsilon}(x)=\frac{|x|^{l}}{(1+\epsilon |x|)^{s}}w(x).
\end{eqnarray*}
Since $v,w\in L^{\infty}$, then $v_{\epsilon}, w_{\epsilon}\in L^{2}$. In addition, since $r>1$ and from (\ref{fnlsC9}), then, for any $\delta>0$, there exists $N\geq 1$ such that
\begin{eqnarray}
|u(x)|_{2}^{r-1}\leq \delta,\quad u(x)=(v(x),w(x)),\label{fnlsC10}
\end{eqnarray}
for almost all $x$ with $|x|\geq N$. Note, using (\ref{fnlsC8}), that (\ref{fnlsC10}) implies
\begin{eqnarray}
|g_{j}(v,w)(x)|\leq \delta (v(x)^{2}+w(x)^{2})^{1/2},\quad j=1,2,\label{fnlsC11}
\end{eqnarray}
for almost all $x$ with $|x|\geq N$. Using the property
\begin{eqnarray*}
(1+|x-y|)^{2s}\leq (2(1+|x-y|^{2}))^{s}=2^{s}(1+|x-y|^{2})^{s},
\end{eqnarray*}
and defining
\begin{eqnarray*}
A_{11}&=&\left(\int_{\mathbb{R}}(1+|x-y|)^{2s}|k_{11}(x-y)|^{2}dy\right)^{1/2},\\
A_{12}&=&\left(\int_{\mathbb{R}}(1+|x-y|)^{2s}|k_{12}(x-y)|^{2}dy\right)^{1/2},
\end{eqnarray*}
then we have (cf. \cite{BonaL})
\begin{eqnarray}
\int_{N}^{\infty}|v_{\epsilon}(x)|^{2}dx
&=&\int_{N}^{\infty}|v_{\epsilon}(x)|\frac{|x|^{l}}{(1+\epsilon |x|)^{s}}|v(x)|dx\nonumber\\
&=&\int_{N}^{\infty}|v_{\epsilon}(x)|\frac{|x|^{l}}{(1+\epsilon |x|)^{s}}\left|\int_{\mathbb{R}}\left(k_{11}(x-y)g_{1}(v(y),w(y))\right.\right.\nonumber\\
&&\left.\left.+k_{12}(x-y)g_{2}(v(y),w(y))dy\right)dx\right|\nonumber\\
&\leq&\int_{N}^{\infty}|v_{\epsilon}(x)|\frac{|x|^{l}}{(1+\epsilon |x|)^{s}}\int_{\mathbb{R}}\left(|k_{11}(x-y)g_{1}(v(y),w(y))|\right.\nonumber\\
&&\left.+|k_{12}(x-y)g_{2}(v(y),w(y))|\right)dydx\nonumber\\
&\leq&\int_{N}^{\infty}|v_{\epsilon}(x)|\frac{|x|^{l}}{(1+\epsilon |x|)^{s}}A_{11}\left(\int_{\mathbb{R}}\frac{|g_{1}(v(y),w(y))|^{2}}{(1+|x-y|)^{2s}}dy\right)^{1/2}dx\nonumber\\
&&+\int_{N}^{\infty}|v_{\epsilon}(x)|\frac{|x|^{l}}{(1+\epsilon |x|)^{s}}A_{12}\left(\int_{\mathbb{R}}\frac{|g_{2}(v(y),w(y))|^{2}}{(1+|x-y|)^{2s}}dy\right)^{1/2}dx\label{fnlsC12}\\
&\leq&2^{s}||\widehat{k}_{11}||_{H^{s}}\left(\int_{N}^{\infty}|v_{\epsilon}(x)|^{2}dx\right)\left(\int_{N}^{\infty}\frac{|x|^{2l}}{(1+\epsilon |x|)^{2s}}\int_{\mathbb{R}}\frac{|g_{1}(v(y),w(y))|^{2}}{(1+|x-y|)^{2s}}dydx\right)^{1/2}\nonumber\\
&&+2^{s}||\widehat{k}_{12}||_{H^{s}}\left(\int_{N}^{\infty}|v_{\epsilon}(x)|^{2}dx\right)\left(\int_{N}^{\infty}\frac{|x|^{2l}}{(1+\epsilon |x|)^{2s}}\int_{\mathbb{R}}\frac{|g_{2}(v(y),w(y))|^{2}}{(1+|x-y|)^{2s}}dydx\right)^{1/2}.\nonumber
\end{eqnarray}
We now make use of Fubini's theorem, (\ref{fnlsC10}), (\ref{fnlsC11}), and the inequalities (cf. \cite{BonaL})
\begin{eqnarray*}
\int_{0}^{\infty}\frac{t^{2l}dt}{(1+\epsilon t )^{2s}(1+|x-t|)^{2s}}&\leq&\frac{B|x|^{2l}}{(1+\epsilon|x|)^{2s}},\\
\int_{N}^{\infty}\frac{t^{2l}dt}{(1+\epsilon t)^{2s}(1+|x-t|)^{2s}}&\leq&\int_{\mathbb{R}}\frac{(|x|+N)^{2l}dx}{(1+|x|)^{2s}},\quad t\in [-N,N],
\end{eqnarray*}
which hold for some constant $B$ and any $x, \epsilon$ with $|x|\geq 1, 0<\epsilon\leq 1$. We also define
\begin{eqnarray*}
R=\left(\int_{-\infty}^{\infty}\frac{(|x|+N)^{2l}dx}{(1+|x|)^{2s}}\right)^{1/2}.
\end{eqnarray*}
With all this, (\ref{fnlsC12}) becomes
\begin{eqnarray}
\left(\int_{N}^{\infty}|v_{\epsilon}(x)|^{2}dx\right)^{1/2}&\leq&2^{s}||\widehat{k}_{11}||_{H^{s}}\left(\int_{\mathbb{R}}dy{|g_{1}(v(y),w(y))|^{2}}\int_{N}^{\infty}\frac{|x|^{2l}}{(1+\epsilon |x|)^{2s}(1+|x-y|)^{2s}}\right)^{1/2}\nonumber\\
&&+2^{s}||\widehat{k}_{12}||_{H^{s}}\left(\int_{\mathbb{R}}dy{|g_{2}(v(y),w(y))|^{2}}\int_{N}^{\infty}\frac{|x|^{2l}}{(1+\epsilon |x|)^{2s}(1+|x-y|)^{2s}}\right)^{1/2}\nonumber\\
&\leq&2^{s}||\widehat{k}_{11}||_{H^{s}}B^{1/2}\left(\left(\int_{-\infty}^{N}+\int_{N}^{\infty}\right)dy{|g_{1}(v(y),w(y))|^{2}}\frac{|y|^{2l}}{(1+\epsilon |y|)^{2s}}\right)^{1/2}\nonumber\\
&&+2^{s}||\widehat{k}_{12}||_{H^{s}}B^{1/2}\left(\left(\int_{-\infty}^{N}+\int_{N}^{\infty}\right)dy{|g_{2}(v(y),w(y))|^{2}}\frac{|y|^{2l}}{(1+\epsilon |y|)^{2s}}\right)^{1/2}\nonumber\\
&&+2^{s}||\widehat{k}_{11}||_{H^{s}}\left(\int_{-N}^{N}dy{|g_{1}(v(y),w(y))|^{2}}\int_{N}^{\infty}\frac{x^{2l}dx}{(1+\epsilon x)^{2s}(1+|y-x|)^{2s}}\right)^{1/2}\nonumber\\
&&+2^{s}||\widehat{k}_{12}||_{H^{s}}\left(\int_{-N}^{N}dy{|g_{2}(v(y),w(y))|^{2}}\int_{N}^{\infty}\frac{x^{2l}dx}{(1+\epsilon x)^{2s}(1+|y-x|)^{2s}}\right)^{1/2}\nonumber\\
&\leq&2^{s}\delta B^{1/2}||\widehat{k}_{11}||_{H^{s}}\left(\left(\int_{-\infty}^{N}+\int_{N}^{\infty}\right)dy{(v(y)^{2}+w(y)^{2})}\frac{|y|^{2l}}{(1+\epsilon |y|)^{2s}}\right)^{1/2}\nonumber\\
&\leq&2^{s}\delta B^{1/2}||\widehat{k}_{12}||_{H^{s}}\left(\left(\int_{-\infty}^{N}+\int_{N}^{\infty}\right)dy{(v(y)^{2}+w(y)^{2})}\frac{|y|^{2l}}{(1+\epsilon |y|)^{2s}}\right)^{1/2}\nonumber\\
&&+2^{s}||\widehat{k}_{11}||_{H^{s}}\left(\int_{-N}^{N}dy{|g_{1}(v(y),w(y))|^{2}}\int_{-\infty}^{\infty}\frac{(|x|+N)^{2l}dx}{(1+|x|)^{2s}}\right)^{1/2}\nonumber\\
&&+2^{s}||\widehat{k}_{12}||_{H^{s}}\left(\int_{-N}^{N}dy{|g_{2}(v(y),w(y))|^{2}}\int_{-\infty}^{\infty}\frac{(|x|+N)^{2l}dx}{(1+|x|)^{2s}}\right)^{1/2}\nonumber\\
&=&2^{s}\delta B^{1/2}||\widehat{k}_{11}||_{H^{s}}\left(\left(\int_{-\infty}^{N}+\int_{N}^{\infty}\right)dy{(v_{\epsilon}(y)^{2}+w_{\epsilon}(y)^{2})}\right)^{1/2}\nonumber\\
&&+2^{s}\delta B^{1/2}||\widehat{k}_{12}||_{H^{s}}\left(\left(\int_{-\infty}^{N}+\int_{N}^{\infty}\right)dy{(v_{\epsilon}(y)^{2}+w_{\epsilon}(y)^{2})}\right)^{1/2}\nonumber\\
&&+2^{s}R||\widehat{k}_{11}||_{H^{s}}\left(\int_{-N}^{N}dy{|g_{1}(v(y),w(y))|^{2}}\right)^{1/2}\nonumber\\
&&+2^{s}R||\widehat{k}_{12}||_{H^{s}}\left(\int_{-N}^{N}dy{|g_{2}(v(y),w(y))|^{2}}\right)^{1/2}.\label{fnlsC13a}
\end{eqnarray}
Similarly
\begin{eqnarray}
\left(\int_{-\infty}^{N}|v_{\epsilon}(x)|^{2}dx\right)^{1/2}&\leq&
2^{s}\delta B^{1/2}||\widehat{k}_{11}||_{H^{s}}\left(\left(\int_{-\infty}^{N}+\int_{N}^{\infty}\right)dy{(v_{\epsilon}(y)^{2}+w_{\epsilon}(y)^{2})}\right)^{1/2}\nonumber\\
&&+2^{s}\delta B^{1/2}||\widehat{k}_{12}||_{H^{s}}\left(\left(\int_{-\infty}^{N}+\int_{N}^{\infty}\right)dy{(v_{\epsilon}(y)^{2}+w_{\epsilon}(y)^{2})}\right)^{1/2}\nonumber\\
&&+2^{s}R||\widehat{k}_{11}||_{H^{s}}\left(\int_{-N}^{N}dy{|g_{1}(v(y),w(y))|^{2}}\right)^{1/2}\nonumber\\
&&+2^{s}R||\widehat{k}_{12}||_{H^{s}}\left(\int_{-N}^{N}dy{|g_{2}(v(y),w(y))|^{2}}\right)^{1/2}.\label{fnlsC13b}
\end{eqnarray}
Adding (\ref{fnlsC13a}) and (\ref{fnlsC13b}) leads to
\begin{eqnarray*}
&&\left(\int_{N}^{\infty}|v_{\epsilon}(x)|^{2}dx\right)^{1/2}+\left(\int_{-\infty}^{N}|v_{\epsilon}(x)|^{2}dx\right)^{1/2}\nonumber\\
&\leq &
2^{s+1}\delta B^{1/2}||\widehat{k}_{11}||_{H^{s}}\left(\left(\int_{-\infty}^{N}+\int_{N}^{\infty}\right)dy{(v_{\epsilon}(y)^{2}+w_{\epsilon}(y)^{2})}\right)^{1/2}\nonumber\\
&&+2^{s+1}\delta B^{1/2}||\widehat{k}_{12}||_{H^{s}}\left(\left(\int_{-\infty}^{N}+\int_{N}^{\infty}\right)dy{(v_{\epsilon}(y)^{2}+w_{\epsilon}(y)^{2})}\right)^{1/2}\nonumber\\
&&+2^{s+1}R\left(||\widehat{k}_{11}||_{H^{s}}\left(\int_{-N}^{N}dy{|g_{1}(v(y),w(y))|^{2}}\right)^{1/2}\right.\\
&&\left.
+||\widehat{k}_{12}||_{H^{s}}\left(\int_{-N}^{N}dy{|g_{2}(v(y),w(y))|^{2}}\right)^{1/2}\right).
\end{eqnarray*}
The same arguments can be used to obtain similar estimates for $w_{\epsilon}$, as
\begin{eqnarray*}
&&\left(\int_{N}^{\infty}|w_{\epsilon}(x)|^{2}dx\right)^{1/2}+\left(\int_{-\infty}^{N}|w_{\epsilon}(x)|^{2}dx\right)^{1/2}\\
&\leq &
2^{s+1}\delta B^{1/2}||\widehat{k}_{21}||_{H^{s}}\left(\left(\int_{-\infty}^{N}+\int_{N}^{\infty}\right)dy{(v_{\epsilon}(y)^{2}+w_{\epsilon}(y)^{2})}\right)^{1/2}\nonumber\\
&&+2^{s+1}\delta B^{1/2}||\widehat{k}_{22}||_{H^{s}}\left(\left(\int_{-\infty}^{N}+\int_{N}^{\infty}\right)dy{(v_{\epsilon}(y)^{2}+w_{\epsilon}(y)^{2})}\right)^{1/2}\nonumber\\
&&+2^{s+1}R\left(||\widehat{k}_{21}||_{H^{s}}\left(\int_{-N}^{N}dy{|g_{1}(v(y),w(y))|^{2}}\right)^{1/2}\right.\\
&&\left.
+||\widehat{k}_{22}||_{H^{s}}\left(\int_{-N}^{N}dy{|g_{2}(v(y),w(y))|^{2}}\right)^{1/2}\right).
\end{eqnarray*}
Now we take
\begin{eqnarray}
\delta<\frac{1}{2^{s+1}B^{1/2}\sum_{i,j}||\widehat{k}_{ij}||_{H^{s}}},\label{fnlsC14a1}
\end{eqnarray}
to have
\begin{eqnarray}
&&\left(\int_{-\infty}^{N}|v_{\epsilon}(x)|^{2}dx\right)^{1/2}+\left(\int_{N}^{\infty}|v_{\epsilon}(x)|^{2}dx\right)^{1/2}+\left(\int_{-\infty}^{N}|w_{\epsilon}(x)|^{2}dx\right)^{1/2}+\left(\int_{N}^{\infty}|w_{\epsilon}(x)|^{2}dx\right)^{1/2}\nonumber\\
&\leq &\frac{2^{s+1}R}{1-2^{s+1}B^{1/2}\sum_{i,j}||\widehat{k}_{ij}||_{H^{s}}}\left(\left(||\widehat{k}_{11}||_{H^{s}}+||\widehat{k}_{21}||_{H^{s}}\right)\left(\int_{-N}^{N}dy{|g_{1}(v(y),w(y))|^{2}}\right)^{1/2}\right.\nonumber\\
&&\left.+
\left(||\widehat{k}_{12}||_{H^{s}}+||\widehat{k}_{22}||_{H^{s}}\right)\left(\int_{-N}^{N}dy{|g_{2}(v(y),w(y))|^{2}}\right)^{1/2}\right).\label{fnlsC15a}
\end{eqnarray}
We take $\epsilon\rightarrow 0$ in (\ref{fnlsC15a}) and apply Fatou's lemma to obtain
\begin{eqnarray}
&&\left(\int_{-\infty}^{N}|x|^{2l}|v(x)|^{2}dx\right)^{1/2}+\left(\int_{N}^{\infty}|x|^{2l}|v(x)|^{2}dx\right)^{1/2}\nonumber\\
&&+\left(\int_{-\infty}^{N}|x|^{2l}|w(x)|^{2}dx\right)^{1/2}+\left(\int_{N}^{\infty}|x|^{2l}|w(x)|^{2}dx\right)^{1/2}\nonumber\\
&\leq &\frac{2^{s+1}R}{1-2^{s+1}B^{1/2}\sum_{i,j}||\widehat{k}_{ij}||_{H^{s}}}\left(\left(||\widehat{k}_{11}||_{H^{s}}+||\widehat{k}_{21}||_{H^{s}}\right)\left(\int_{-N}^{N}dy{|g_{1}(v(y),w(y))|^{2}}\right)^{1/2}\right.\nonumber\\
&&\left.+
\left(||\widehat{k}_{12}||_{H^{s}}+||\widehat{k}_{22}||_{H^{s}}\right)\left(\int_{-N}^{N}dy{|g_{2}(v(y),w(y))|^{2}}\right)^{1/2}\right).\label{fnlsC15b}
\end{eqnarray}
Then (\ref{fnlsC8}), the hypothesis $(v,w)\in L^{\infty}\times L^{\infty}$, and (\ref{fnlsC15b}) imply that $(1+|x|^{l})v(x), (1+|x|^{l})w(x)\in L^{2}$, that is $\widehat{v},\widehat{w}\in H^{l}$. In particular, $v,w\in L^{2}$. Since $x\mapsto 1/(1+\epsilon |x|)^{s}$ is also in $L^{2}$ for any $\epsilon>0$, then 
\begin{eqnarray*}
\frac{v(x)}{(1+\epsilon |x|)^{s}}, \frac{w(x)}{(1+\epsilon |x|)^{s}}\in L^{1}.
\end{eqnarray*}
On the other hand, we apply Fubini's theorem, Schwarz inequality, (\ref{fnlsC10}), (\ref{fnlsC11}), the inequality
\begin{eqnarray*}
(v^{2}+w^{2})^{1/2}\leq |v|+|w|,
\end{eqnarray*}
(\ref{fnlsC4}), and Lemma 3.1.1 of \cite{BonaL} to have
\begin{eqnarray*}
&&\int_{N}^{\infty}\frac{v(x)}{(1+\epsilon |x|)^{s}}dx\\
&\leq&\int_{\mathbb{R}}dy|g_{1}(v(y),w(y))|\int_{N}^{\infty}\frac{|k_{11}(x-y)|}{(1+\epsilon |x|)^{s}}dx+\int_{\mathbb{R}}dy|g_{2}(v(y),w(y))|\int_{N}^{\infty}\frac{|k_{12}(x-y)|}{(1+\epsilon |x|)^{s}}dx\nonumber\\
&\leq&\int_{\mathbb{R}}dy|g_{1}(v(y),w(y))|\left(\int_{N}^{\infty}(1+|x-y|)^{2s}|k_{11}(x-y)|^{2}dx\right)^{1/2}\left(\int_{N}^{\infty}\frac{dx}{(1+|x-y|)^{2s}(1+\epsilon |x|)^{2s}}\right)^{1/2}\nonumber\\
&&+\int_{\mathbb{R}}dy|g_{2}(v(y),w(y))|\left(\int_{N}^{\infty}(1+|x-y|)^{2s}|k_{12}(x-y)|^{2}dx\right)^{1/2}\left(\int_{N}^{\infty}\frac{dx}{(1+|x-y|)^{2s}(1+\epsilon |x|)^{2s}}dx\right)^{1/2}\nonumber\\
&\leq&2^{s}||\widehat{k}_{11}||_{H^{s}}B^{1/2}\int_{\mathbb{R}}\frac{|g_{1}(v(y),w(y))|dy}{(1+\epsilon |y|)^{s}}+2^{s}||\widehat{k}_{12}||_{H^{s}}B^{1/2}\int_{\mathbb{R}}\frac{|g_{2}(v(y),w(y))|dy}{(1+\epsilon |y|)^{s}}\nonumber\\
&\leq&2^{s}||\widehat{k}_{11}||_{H^{s}}\delta B^{1/2}\left(\int_{-\infty}^{-N}+\int_{N}^{\infty}\right)\frac{(v(y)^{2}+w(y)^{2})^{1/2}dy}{(1+\epsilon |y|)^{s}}\nonumber\\
&&+2^{s}||\widehat{k}_{12}||_{H^{s}}\delta B^{1/2}\left(\int_{-\infty}^{-N}+\int_{N}^{\infty}\right)\frac{(v(y)^{2}+w(y)^{2})^{1/2}dy}{(1+\epsilon |y|)^{s}}\nonumber\\
&&+2^{s}||\widehat{k}_{11}||_{H^{s}} B^{1/2}\int_{-N}^{N}{|g_{1}(v(y),w(y))|dy}+2^{s}||\widehat{k}_{12}||_{H^{s}} B^{1/2}\int_{-N}^{N}{|g_{2}(v(y),w(y))|dy}\nonumber\\
&\leq&2^{s}||\widehat{k}_{11}||_{H^{s}}\delta B^{1/2}\left(\int_{-\infty}^{-N}+\int_{N}^{\infty}\right)\frac{(|v(y)|+|w(y)|)dy}{(1+\epsilon |y|)^{s}}\nonumber\\
&&+2^{s}||\widehat{k}_{12}||_{H^{s}}\delta B^{1/2}\left(\int_{-\infty}^{-N}+\int_{N}^{\infty}\right)\frac{(|v(y)|+|w(y)|)dy}{(1+\epsilon |y|)^{s}}\nonumber\\
&&+2^{s}||\widehat{k}_{11}||_{H^{s}} B^{1/2}\int_{-N}^{N}{|g_{1}(v(y),w(y))|dy}+2^{s}||\widehat{k}_{12}||_{H^{s}} B^{1/2}\int_{-N}^{N}{|g_{2}(v(y),w(y))|dy}.
\end{eqnarray*}
Similarly
\begin{eqnarray*}
\int_{-\infty}^{-N}\frac{v(x)}{(1+\epsilon |x|)^{s}}dx&\leq&2^{s}||\widehat{k}_{11}||_{H^{s}}\delta B^{1/2}\left(\int_{-\infty}^{-N}+\int_{N}^{\infty}\right)\frac{(|v(y)|+|w(y)|)dy}{(1+\epsilon |y|)^{s}}\\
&&+2^{s}||\widehat{k}_{12}||_{H^{s}}\delta B^{1/2}\left(\int_{-\infty}^{-N}+\int_{N}^{\infty}\right)\frac{(|v(y)|+|w(y)|)dy}{(1+\epsilon |y|)^{s}}\nonumber\\
&&+2^{s}||\widehat{k}_{11}||_{H^{s}} B^{1/2}\int_{-N}^{N}{|g_{1}(v(y),w(y))|dy}+2^{s}||\widehat{k}_{12}||_{H^{s}} B^{1/2}\int_{-N}^{N}{|g_{2}(v(y),w(y))|dy},
\end{eqnarray*}
and
\begin{eqnarray*}
\int_{N}^{\infty}\frac{w(x)}{(1+\epsilon |x|)^{s}}dx&\leq&
2^{s}||\widehat{k}_{21}||_{H^{s}}\delta B^{1/2}\left(\int_{-\infty}^{-N}+\int_{N}^{\infty}\right)\frac{(|v(y)|+|w(y)|)dy}{(1+\epsilon |y|)^{s}}\\
&&+2^{s}||\widehat{k}_{22}||_{H^{s}}\delta B^{1/2}\left(\int_{-\infty}^{-N}+\int_{N}^{\infty}\right)\frac{(|v(y)|+|w(y)|)dy}{(1+\epsilon |y|)^{s}}\nonumber\\
&&+2^{s}||\widehat{k}_{21}||_{H^{s}} B^{1/2}\int_{-N}^{N}{|g_{1}(v(y),w(y))|dy}+2^{s}||\widehat{k}_{22}||_{H^{s}} B^{1/2}\int_{-N}^{N}{|g_{2}(v(y),w(y))|dy},
\end{eqnarray*}
\begin{eqnarray*}
\int_{-\infty}^{-N}\frac{w(x)}{(1+\epsilon |x|)^{s}}dx&\leq&
2^{s}||\widehat{k}_{21}||_{H^{s}}\delta B^{1/2}\left(\int_{-\infty}^{-N}+\int_{N}^{\infty}\right)\frac{(|v(y)|+|w(y)|)dy}{(1+\epsilon |y|)^{s}}\\
&&+2^{s}||\widehat{k}_{22}||_{H^{s}}\delta B^{1/2}\left(\int_{-\infty}^{-N}+\int_{N}^{\infty}\right)\frac{(|v(y)|+|w(y)|)dy}{(1+\epsilon |y|)^{s}}\nonumber\\
&&+2^{s}||\widehat{k}_{21}||_{H^{s}} B^{1/2}\int_{-N}^{N}{|g_{1}(v(y),w(y))|dy}+2^{s}||\widehat{k}_{22}||_{H^{s}} B^{1/2}\int_{-N}^{N}{|g_{2}(v(y),w(y))|dy}.
\end{eqnarray*}
Therefore, using (\ref{fnlsC14a1}), it holds that
\begin{eqnarray*}
\left(\int_{-\infty}^{-N}+\int_{N}^{\infty}\right)\frac{(|v(y)|+|w(y)|)dy}{(1+\epsilon |y|)^{s}}&\leq&
\frac{2^{s+1}B^{1/2}}{1-2^{s+1}B^{1/2}\delta\sum_{i,j}||\widehat{k}_{ij}||_{H^{s}}}\left(\left(||\widehat{k}_{11}||_{H^{s}}\right.\right.\\
&&\left.\left.+||\widehat{k}_{21}||_{H^{s}}\right)\int_{-N}^{N}dy|g_{1}(v(y),w(y))|\right.\nonumber\\
&&+\left. \left(||\widehat{k}_{12}||_{H^{s}}+||\widehat{k}_{22}||_{H^{s}}\right)\int_{-N}^{N}dy{|g_{2}(v(y),w(y))|}\right).
\end{eqnarray*}
Taking $\epsilon\rightarrow 0$ and using Fatou's lemma yield
\begin{eqnarray*}
\left(\int_{-\infty}^{-N}+\int_{N}^{\infty}\right){(|v(y)|+|w(y)|)dy}&\leq&
\frac{2^{s+1}B^{1/2}\sum_{i,j}||\widehat{k}_{ij}||_{H^{s}}}{1-2^{s+1}B^{1/2}\delta\sum_{i,j}||\widehat{k}_{ij}||_{H^{s}}}\left(\int_{-N}^{N}dy{|g_{1}(v(y),w(y))|}\right.\\
&&\left.+\int_{-N}^{N}dy{|g_{2}(v(y),w(y))|}\right),
\end{eqnarray*}
which implies $v,w\in L^{1}$.

We now multiply (\ref{fnlsC4}) by $|x|^{l}$ and apply the property
\begin{eqnarray*}
|x|^{l}\leq \beta(|x-y|^{l}+|y|^{l}),\quad \beta=\max\{1,2^{l-1}\},
\end{eqnarray*}
to obtain
\begin{eqnarray}
|x|^{l}|v(x)|&\leq&\beta\int_{\mathbb{R}}|x-y|^{l}|k_{11}(x-y)g_{1}(v(y),w(y))|dy+\beta\int_{\mathbb{R}}|y|^{l}|k_{11}(x-y)g_{1}(v(y),w(y))|dy\nonumber\\
&&+\beta\int_{\mathbb{R}}|x-y|^{l}|k_{12}(x-y)g_{2}(v(y),w(y))|dy\nonumber\\
&&+\beta\int_{\mathbb{R}}|y|^{l}|k_{12}(x-y)g_{2}(v(y),w(y))|dy,\label{fnlsC17}
\end{eqnarray}
and a similar inequality for $|x|^{l}|v(x)|$. Since $k_{ij}, |x|^{l}k_{ij}, g_{i}(v,w), |x|^{l}g_{i}(v,w)$ are in $L^{2}$, then the right-hand side of (\ref{fnlsC17}) is the convolution of $L^{2}$ functions and, therefore, it is a bounded function. This implies 
\begin{eqnarray*}
|x|^{l}v(x), |x|^{l}w(x)\in L^{\infty},
\end{eqnarray*}
and completes the proof.
\end{proof}
Now, the arguments used in Corollary 3.1.3 of \cite{BonaL} can be applied to each kernel $k_{ij}, i,j=1,2$, nonlinear terms $g_{i}, i=1,2$ and the system for $v,w$, leading to
\begin{corollary}
\label{corol1}
Let $(v,w)$ be a solution of (\ref{fnlsC4}) satisfying the assumptions of Theorem \ref{theor1}. Then $\widehat{v},\widehat{w}\in H^{s}$ and
\begin{eqnarray*}
|x|^{\alpha}v(x), |x|^{\alpha}w(x)\in L^{\infty},
\end{eqnarray*}
for any $0<\alpha\leq s$.
\end{corollary}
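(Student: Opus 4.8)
The plan is to run a finite bootstrap that starts from the weight $l\in(0,s)$ furnished by Theorem \ref{theor1} and multiplies the admissible weight by the factor $r=2\sigma+1>1$ at each step, until it saturates at $s$. The inductive statement carried along is the pair of bounds $\langle x\rangle^{l_n}v,\langle x\rangle^{l_n}w\in L^{2}\cap L^{\infty}$, equivalently $\widehat v,\widehat w\in H^{l_n}$ together with the pointwise decay $|u|_{2}(x)\le C\langle x\rangle^{-l_n}$, where $l_n=\min\{s,l\,r^{n}\}$ and $\langle x\rangle=(1+|x|^{2})^{1/2}$. The base case $n=0$ is exactly the conclusion of Theorem \ref{theor1} (combining the weighted $L^{\infty}$ bound with $v,w\in L^{\infty}$, and the weighted $L^{2}$ bound with $v,w\in L^{2}$). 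Since $r>1$, the sequence $l\,r^{n}$ exceeds $s$ after finitely many steps, so the bootstrap terminates with $\widehat v,\widehat w\in H^{s}$ and $|x|^{s}v,|x|^{s}w\in L^{\infty}$; the statement for arbitrary $0<\alpha\le s$ then follows from $|x|^{\alpha}\le 1+|x|^{s}$.

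The heart of the argument is the inductive step, and in particular the gain on the nonlinear term. From (\ref{fnlsC8}) one has $|g_j(v,w)|\le |u|_{2}^{\,r}$, $j=1,2$, so $g_1,g_2\in L^{1}\cap L^{2}$ (using $v,w\in L^{1}\cap L^{2}\cap L^{\infty}$ from Theorem \ref{theor1}). The key observation is that $g_j$ inherits a faster weighted decay than $u$: writing $r=(r-1)+1$ and splitting the weight as
\begin{eqnarray*}
\langle x\rangle^{l_{n+1}}|g_j|\le \big(\langle x\rangle^{\,l_{n+1}-l_n}|u|_{2}^{\,r-1}\big)\,\big(\langle x\rangle^{l_n}|u|_{2}\big),\qquad l_{n+1}=\min\{s,l_n r\},
\end{eqnarray*}
the first factor is bounded because its exponent $l_{n+1}-l_n-(r-1)l_n=l_{n+1}-r\,l_n\le 0$ while $|u|_{2}\le C\langle x\rangle^{-l_n}$, and the second factor lies in $L^{2}$ by the inductive hypothesis. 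Hence $\langle x\rangle^{l_{n+1}}g_j\in L^{2}$. It is precisely here that both the $L^{2}$ and the $L^{\infty}$ weighted bounds of Theorem \ref{theor1} are needed: placing the $L^{\infty}$ decay on the $r-1$ repeated factors and the weighted $L^{2}$ bound on the remaining factor produces a genuine gain of the full factor $r$ with no loss of a half power. An estimate based on $u\in L^{2}$ alone would only multiply the weight by $r-1$, which fails to grow when $\sigma\le 1/2$.

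It remains to transfer this improved decay of $g_1,g_2$ back to $v,w$ through the kernels. Theorem \ref{theor0} gives $|x|^{2s+1}k_{11}(x)\to K_1$ and $|x|^{2s+2}k_{12}(x)\to K_2$, so $|k_{11}(x)|\le C\langle x\rangle^{-(2s+1)}$ and $|k_{12}(x)|\le C\langle x\rangle^{-(2s+2)}$ for large $|x|$; since the representations (\ref{fnlsC6a})--(\ref{fnlsC6b}) converge absolutely for $s>1/2$, the $k_{ij}$ are bounded near the origin, whence $k_{ij}\in L^{1}$, while $\widehat k_{ij}\in H^{s}$ gives $k_{ij}\in L^{2}$ and $\langle x\rangle^{s}k_{ij}\in L^{2}$. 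Starting from (\ref{fnlsC4}) and using $\langle x\rangle^{l_{n+1}}\le \beta(\langle x-y\rangle^{l_{n+1}}+\langle y\rangle^{l_{n+1}})$, the weighted solution $\langle x\rangle^{l_{n+1}}v$ is dominated by convolutions of the form $(\langle\cdot\rangle^{l_{n+1}}k_{ij})\ast g_j$ and $k_{ij}\ast(\langle\cdot\rangle^{l_{n+1}}g_j)$. Since $l_{n+1}\le s$, the weighted kernels lie in $L^{2}$; Young's inequality (pairing $L^{2}$ with $L^{1}$) then yields $\langle x\rangle^{l_{n+1}}v,\langle x\rangle^{l_{n+1}}w\in L^{2}$, i.e. $\widehat v,\widehat w\in H^{l_{n+1}}$, while the Cauchy--Schwarz bound for the convolution of two $L^{2}$ functions, exactly as in (\ref{fnlsC17}) in the proof of Theorem \ref{theor1}, gives $\langle x\rangle^{l_{n+1}}v,\langle x\rangle^{l_{n+1}}w\in L^{\infty}$. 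This closes the induction; the estimates for $w$ are identical, since $k_{21}=-k_{12}$ and $k_{22}=k_{11}$.

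The main obstacle is the gain step for the nonlinearity described above: one must improve the weight by the full factor $r$, and the naive route through $|x|^{l}u\in L^{2}$ alone loses too much when $\sigma$ is small. Carrying the pair of weighted bounds and distributing the $L^{\infty}$ decay over the $r-1$ copies of $u$ is what makes the iteration monotone for every admissible $\sigma>0$; the remaining ingredients (the integrability $k_{ij},g_j\in L^{1}\cap L^{2}$ and the convolution estimates) are routine once the kernel decay of Theorem \ref{theor0} is in hand. This is the adaptation to the system (\ref{fnlsC4}) of the argument in Corollary 3.1.3 of \cite{BonaL}.
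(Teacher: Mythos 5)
Your bootstrap is correct and is exactly the route the paper takes: the paper proves this corollary by a one-line appeal to the argument of Corollary 3.1.3 of \cite{BonaL} applied componentwise to the kernels $k_{ij}$ and nonlinearities $g_{i}$, which is precisely the iteration $l_{n+1}=\min\{s,r\,l_{n}\}$ you spell out, with the gain on $g_{j}$ obtained by pairing the weighted $L^{\infty}$ decay of $|u|_{2}^{\,r-1}$ with the weighted $L^{2}$ bound on the remaining factor and then transferring back through the kernels via Young and Cauchy--Schwarz. Your version is in fact more explicit than the paper's, which omits these details entirely.
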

The following result establishes the asymptotic decay of solutions of (\ref{fnlsC4}); the proof relies on an adaptation of Theorem 3.1.5 of \cite{BonaL}
\begin{theorem}
\label{theor2} Let $(v,w)$ be a solution of (\ref{fnlsC4}) satisfying the assumptions of Theorem \ref{theor1}, and $m=2s+1$. Then
\begin{eqnarray}
\lim_{|x|\rightarrow\infty}|x|^{2s+1}v(x)&=&K_{1}\int_{\mathbb{R}}g_{1}(v(y),w(y))dy,\nonumber\\
\lim_{|x|\rightarrow\infty}|x|^{2s+1}w(x)&=&K_{1}\int_{\mathbb{R}}g_{2}(v(y),w(y))dy,\label{fnlsC18}
\end{eqnarray}
where $K_{1}$ is given in (\ref{fnlsC7a}).
\end{theorem}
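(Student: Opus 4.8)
The plan is to substitute the convolution representation (\ref{fnlsC4}) into $|x|^{m}v(x)$, with $m=2s+1$, and to isolate the contribution of the slowly-decaying kernel $k_{11}$. Abbreviating $g_{j}=g_{j}(v(y),w(y))$, I would start from
\begin{eqnarray*}
|x|^{m}v(x)=\int_{\mathbb{R}}|x|^{m}k_{11}(x-y)g_{1}(y)\,dy+\int_{\mathbb{R}}|x|^{m}k_{12}(x-y)g_{2}(y)\,dy.
\end{eqnarray*}
From the explicit symbols (\ref{fnlsC3}) one has $\widehat{k}_{11}(\xi)=O(|\xi|^{-2s})$ and $\widehat{k}_{12}(\xi)=O(|\xi|^{1-4s})$ at infinity, both integrable because $s>1/2$, so $k_{ij}\in L^{1}\cap L^{\infty}$; combined with Theorem \ref{theor0} this gives the pointwise bounds $|k_{11}(z)|\le C(1+|z|)^{-m}$ and $|k_{12}(z)|\le C(1+|z|)^{-(m+1)}$. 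Moreover $g_{1},g_{2}\in L^{1}$, since $|g_{j}|\le ||u||_{L^{\infty}}^{2\sigma}|u_{j}|$ with $v,w\in L^{1}$ by Theorem \ref{theor1}. The problem thus reduces to computing the limits of the two integrals, the second of which should vanish because $k_{12}$ decays one order faster than $k_{11}$.

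I would split $\mathbb{R}$ into the regions $R_{1}=\{|y|\le|x|/2\}$, $R_{2}=\{|x-y|\le|x|/2\}$ and $R_{3}=\{|y|>|x|/2,\ |x-y|>|x|/2\}$. On $R_{1}$ we have $|x-y|\ge|x|/2$, so $|x|^{m}|k_{11}(x-y)|\le C2^{m}$ while
\begin{eqnarray*}
|x|^{m}k_{11}(x-y)=\left(\frac{|x|}{|x-y|}\right)^{m}|x-y|^{m}k_{11}(x-y)\longrightarrow K_{1}
\end{eqnarray*}
pointwise in $y$; since $g_{1}\in L^{1}$, dominated convergence yields $\int_{R_{1}}|x|^{m}k_{11}(x-y)g_{1}(y)\,dy\to K_{1}\int_{\mathbb{R}}g_{1}(y)\,dy$, and on the same region $|x|^{m}|k_{12}(x-y)|\le C2^{m+1}|x|^{-1}\to0$, so the $k_{12}$ contribution over $R_{1}$ vanishes. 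On $R_{3}$ both $|x|^{m}|k_{11}(x-y)|$ and $|x|^{m}|k_{12}(x-y)|$ stay bounded by $C2^{m}$, while $\int_{|y|>|x|/2}(|g_{1}|+|g_{2}|)\,dy\to0$ as the tail of $L^{1}$ functions, so the $R_{3}$ contribution vanishes too.

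The delicate region is $R_{2}$, where $y$ is close to $x$ (hence $|y|\ge|x|/2$), neither kernel is small, and the prefactor $|x|^{m}$ is large. Using $|g_{j}(y)|\le|u(y)|_{2}^{r}\le C(1+|y|)^{-r\alpha}$ with $r=2\sigma+1$, together with $k_{1j}\in L^{1}$, one gets
\begin{eqnarray*}
\left|\int_{R_{2}}|x|^{m}k_{1j}(x-y)g_{j}(y)\,dy\right|\le |x|^{m}\sup_{|y|\ge|x|/2}|g_{j}(y)|\;||k_{1j}||_{L^{1}}\le C\,|x|^{\,m-r\alpha},
\end{eqnarray*}
where $\alpha$ is any exponent with $|x|^{\alpha}v,|x|^{\alpha}w\in L^{\infty}$. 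This tends to $0$ precisely when $r\alpha>m$, that is $(2\sigma+1)\alpha>2s+1$. The main obstacle is that Corollary \ref{corol1} only supplies $\alpha\le s$, for which $r\alpha>m$ may fail when $\sigma$ is small, so the decay rate must first be bootstrapped. Inserting the bound $|g_{j}(y)|\le C(1+|y|)^{-r\alpha}$ (together with $g_{j}\in L^{1}$) into the convolution estimates of \cite{BonaL} of the type already used in Theorem \ref{theor1} gives $|v(x)|+|w(x)|\le C(1+|x|)^{-\min\{m,\,r\alpha\}}$, hence $|x|^{\alpha'}v,|x|^{\alpha'}w\in L^{\infty}$ with $\alpha'=\min\{m,r\alpha\}$. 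Since $r>1$, iterating this improvement from $\alpha_{0}=s$ raises the exponent geometrically and, avoiding the borderline value $r\alpha=m$ (which would cost only a harmless logarithm), reaches some $\alpha<m$ with $r\alpha>m$, which closes the $R_{2}$ estimate for both kernels.

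Collecting the three regions together with the vanishing $k_{12}$ contribution yields $\lim_{|x|\to\infty}|x|^{m}v(x)=K_{1}\int_{\mathbb{R}}g_{1}(v(y),w(y))\,dy$. The identical argument applied to the second equation $w=k_{21}\ast g_{1}+k_{22}\ast g_{2}$, in which $k_{22}=k_{11}$ is the slowly-decaying kernel and $k_{21}=-k_{12}$ the faster-decaying one, gives $\lim_{|x|\to\infty}|x|^{m}w(x)=K_{1}\int_{\mathbb{R}}g_{2}(v(y),w(y))\,dy$, which is (\ref{fnlsC18}) and completes the proof.
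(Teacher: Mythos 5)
Your proposal is correct and follows essentially the same route as the paper: a bootstrap that upgrades the decay of $(v,w)$ from the rate $s$ of Corollary \ref{corol1} towards $m=2s+1$ by exploiting $r=2\sigma+1>1$, followed by a splitting of the convolution that isolates the region where $x-y$ is large so that $|x-y|^{m}k_{11}(x-y)\to K_{1}$ produces the main term while the faster-decaying $k_{12}$ contributes nothing. The only differences are cosmetic: you use a moving cutoff $|y|\le|x|/2$ with dominated convergence and pointwise bounds $(1+|x|)^{-\alpha}$, whereas the paper uses a fixed cutoff $|y|\le N$ with an $\epsilon$-argument and statements of the form $|x|^{l}v(x)\to 0$ for $l<m$.
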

\begin{proof}
Since the proof of (\ref{fnlsC18}) is similar for both equalities, we focus on the first one. 
We first show that for any $l$ with $0<l<m$
\begin{eqnarray}
\lim_{|x|\rightarrow\infty}|x|^{l}v(x)=0.\label{fnlsC19a}
\end{eqnarray}
We take $l_{1}>0$ such that $m-1/2<l_{1}<\min\{m, r(m-1/2)\}$ where $r=2\sigma+1$. From (\ref{fnlsC7a}), (\ref{fnlsC7b}), and Corollary \ref{corol1}, we have, for $i,j=1,2$
\begin{eqnarray}
&&\lim_{|x|\rightarrow\infty}|x|^{l_{1}}k_{ij}(x)=0\nonumber\\
&&|x|^{l_{1}}|g_{j}(v,w)(x)|\leq (|u(x)|_{2}|x|^{\frac{l_{1}}{r}})^{r}\rightarrow 0,\quad |x|\rightarrow\infty.\label{fnlsC19b}
\end{eqnarray}
Furthermore, since $k_{ij}, g_{i}\in L^{1}\cap L^{2}$, we have
\begin{eqnarray*}
|x|^{l_{1}}|v(x)|&\leq&\beta\int_{\mathbb{R}}|x-y|^{l_{1}}|k_{11}(x-y)g_{1}(v(y),w(y))|dy\\
&&+\beta\int_{\mathbb{R}}|y|^{l_{1}}|k_{11}(x-y)g_{1}(v(y),w(y))|dy\nonumber\\
&&+\beta\int_{\mathbb{R}}|x-y|^{l_{1}}|k_{12}(x-y)g_{2}(v(y),w(y))|dy\\
&&+\beta\int_{\mathbb{R}}|y|^{l_{1}}|k_{12}(x-y)g_{2}(v(y),w(y))|dy,
\end{eqnarray*}
where $\beta=\max\{1,2^{l_{1}-1}\}$. Therefore, using (\ref{fnlsC19b}) leads to $|x|^{l_{1}}|v(x)|\rightarrow 0$ as $|x|\rightarrow \infty$. Similar arguments prove that $|x|^{l_{1}}|w(x)|\rightarrow 0$ as $|x|\rightarrow \infty$.  If $l_{1}<m$, we choose $l_{2}\in (l_{1}, \min\{m, rl_{1}\})$ and repeat the argument to have
\begin{eqnarray*}
|x|^{l_{2}}|v(x)|\rightarrow 0,\quad |x|^{l_{2}}|w(x)|\rightarrow 0,
\end{eqnarray*} 
as $|x|\rightarrow \infty$. Iterating the argument a finite number of times, cf. \cite{BonaL}, leads to (\ref{fnlsC19a}) and
similarly
\begin{eqnarray}
\lim_{|x|\rightarrow\infty}|x|^{l}w(x)=0.\label{fnlsC19aa}
\end{eqnarray}
Now, since (\ref{fnlsC7a}), (\ref{fnlsC7b}) hold, then there are constants $N_{0}, A>0$ such that
\begin{eqnarray*}
|x|^{m}|k_{ij}(x)|\leq A,\quad |k_{ij}(x)|\leq A,\quad |x|\geq N_{0}.
\end{eqnarray*}
In addition, under hypotheses of theorem \ref{theor1}, for any $\epsilon>0$ there is $N\geq N_{0}$ such that, for $i,j=1,2$
\begin{eqnarray*}
&&\int_{N}^{\infty}|g_{i}(v(x),w(x))|dx<\epsilon,\quad \int_{N}^{\infty}|g_{i}(v(x),w(x))|^{2}dx<\epsilon^{2},\quad \int_{-\infty}^{N}|g_{i}(v(x),w(x))|dx<\epsilon,\\
&&\int_{N}^{\infty}|k_{ij}(x)|dx<\epsilon,\quad \int_{-\infty}^{N}|k_{ij}(x)|dx<\epsilon,\quad |y|^{m}|g_{i}(v(y),w(y))|< \epsilon, \;\;|y|\geq N.
\end{eqnarray*}
We may also assume that, for any $t\in (-N,N), x\geq N+N_{0}$
\begin{eqnarray*}
|x^{m}k_{ij}(x-y)-\widetilde{K}|<\epsilon,\quad \widetilde{K}=\left\{\begin{matrix}K_{1}&i=j\\0&i\neq j.\end{matrix}\right.
\end{eqnarray*}
Then, for $x$ large enough
\begin{eqnarray*}
|x^{m}v(x)-K_{1}\int_{\mathbb{R}}g_{1}(v(y),w(y))dy|&\leq& \left|\int_{|y|\leq N}(x^{m}k_{11}(x-y)-K_{1})g_{1}(v(y),w(y))dy\right|\\
&&+ \left|\int_{|y|\leq N}(x^{m}k_{12}(x-y))g_{2}(v(y),w(y))dy\right|\\
&&+\left|K_{1}\int_{|y|\geq N}g_{1}(v(y),w(y))dy\right|\\
&&+2^{m-1}\left(\int_{-\infty}^{-N}|x-y|^{m}|k_{11}(x-y)g_{1}(v(y),w(y))|dy\right.\\
&&+\left.\int_{-\infty}^{-N}|y|^{m}|k_{11}(x-y)g_{1}(v(y),w(y))|dy\right.\\
&&\left.+\int_{-\infty}^{-N}|x-y|^{m}|k_{12}(x-y)g_{2}(v(y),w(y))|dy\right.\\
&&\left.+\int_{-\infty}^{-N}|y|^{m}|k_{12}(x-y)g_{2}(v(y),w(y))|dy\right)\\
&&+2^{m-1}\left(\int_{N,N_{0}}|x-y|^{m}|k_{11}(x-y)g_{1}(v(y),w(y))|dy\right.\\
&&\left.+\int_{N,N_{0}}|x-y|^{m}|k_{12}(x-y)g_{2}(v(y),w(y))|dy\right)\\
&&+2^{m-1}\left(\int_{N}^{\infty}|y|^{m}|k_{11}(x-y)g_{1}(v(y),w(y))|dy\right.\\
&&\left.+\int_{N}^{\infty}|y|^{m}|k_{12}(x-y)g_{2}(v(y),w(y))|dy\right)\\
&\leq&K_{0}\epsilon,
\end{eqnarray*}
where
\begin{eqnarray*}
\int_{N,N_{0}}&:=&\int_{N}^{x-N_{0}}+\int_{x-N_{0}}^{x+N_{0}}+\int_{x+N_{0}}^{\infty},\\
K_{0}&:=&||g_{1}(v,w)||_{L^{1}}+||g_{2}(v,w)||_{L^{1}}+2|K_{1}|+2^{m}3A\\
&&+2^{m-1}N_{0}^{m}(||k_{1}||_{L^{2}}+||k_{12}||_{L^{2}}+2^{m}(||k_{1}||_{L^{1}}+||k_{12}||_{L^{1}}.
\end{eqnarray*} 
Since $\epsilon$ is arbitrary, then the first limit in  (\ref{fnlsC18}) holds. Similar arguments can be applied to
\begin{eqnarray*}
|x^{m}v(x)-K_{1}\int_{\mathbb{R}}g_{1}(v(y),w(y))dy|,
\end{eqnarray*}
to prove the second one.
\end{proof}
\section{Numerical experiments}
\label{sec4}
This section is devoted to the numerical illustration of theorem \ref{theor2}. To this end, we make use of the numerical procedure introduced in \cite{DR1} to generate approximate solitary wave profiles. We recall that the scheme consists of writing (\ref{fnls22_2}) in a fixed-point form
\begin{eqnarray*}
Q\begin{pmatrix}v\\w\end{pmatrix}=G(v,w)=(v^{2}+w^{2})^{\sigma+1}\begin{pmatrix}v\\w\end{pmatrix},\label{fnls_2327}
\end{eqnarray*}
where $Q$ and $G$ are defined in (\ref{fnls_235}), (\ref{fnlsC3b}), respectively, and solving it iteratively by the Petviashvili method, \cite{Petv1976}
\begin{eqnarray}
m_{\nu}&=&\frac{\langle Qz^{[\nu]},z^{[\nu]}\rangle}{\langle G(z^{[\nu]}),z^{[\nu]}\rangle},\nonumber\\
Qz^{[\nu+1]}&=&m_{\nu}^{\alpha}G(z^{[\nu]}),\quad \nu=0,1,\ldots,\label{fnls_311}
\end{eqnarray}
where $\langle\cdot,\cdot\rangle$ is the $L^{2}$-inner product and $\alpha\in (1,(2\sigma+2)/2\sigma)$, with $\alpha=(2\sigma+1)/2\sigma$ as optimal choice, \cite{pelinovskys}. The implementation makes use of a Fourier collocation approximation of the profiles on a long enough interval $(-L,L)$ and vector extrapolation techniques to accelerate the convergence of (\ref{fnls_311}), \cite{sidi,sidifs,smithfs,AlvarezD2016}.

The experiments illustrating and specifying the asymptotic decay of the solitary waves are displayed in Figures \ref{SPAM_Fig1}-\ref{SPAM_Fig3}, for approximate profiles generated with the values  $\lambda_{0}^{1}=1,\sigma=1,\theta(x)=x^{2}$, speed, represented by the second Lagrange multiplier $c_{s}=\lambda_{0}^{2}=0.25$, and
different values of the parameter $s$. They show, in log-log scale, the real component $v$ and the modulus $\rho=\sqrt{v^{2}+w^{2}}$ of the computed profiles, for different values of the length of the interval $(-L,L)$. The slopes of the lines are compared with that of the dashed line, confirming a decay like $1/|x|^{2s+1}$. The dependence on $L$  makes necessarily influence in the accuracy of the method, and must be taken into account when studying numerically the dynamics of the waves, where long time computations are required, \cite{DR2}.
\begin{figure}[htbp]
\centering
\subfigure[]
{\includegraphics[width=6cm]{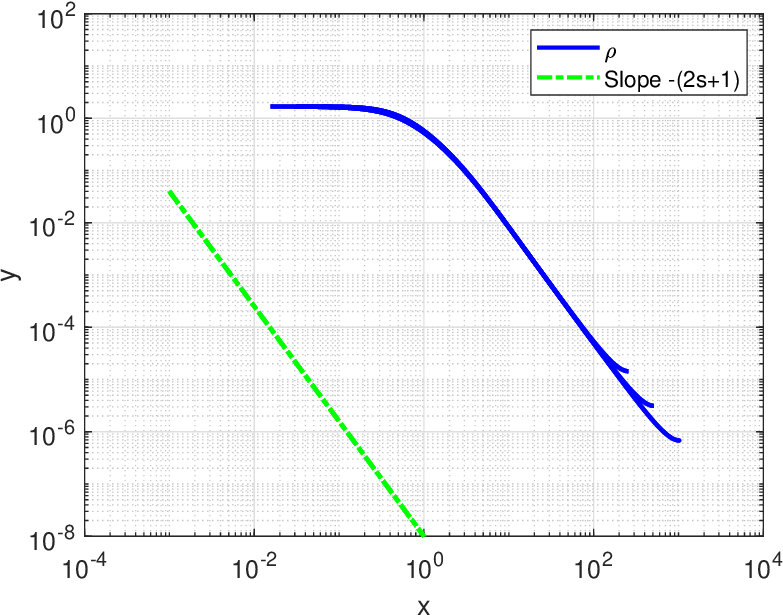}}
\subfigure[]
{\includegraphics[width=6.5cm]{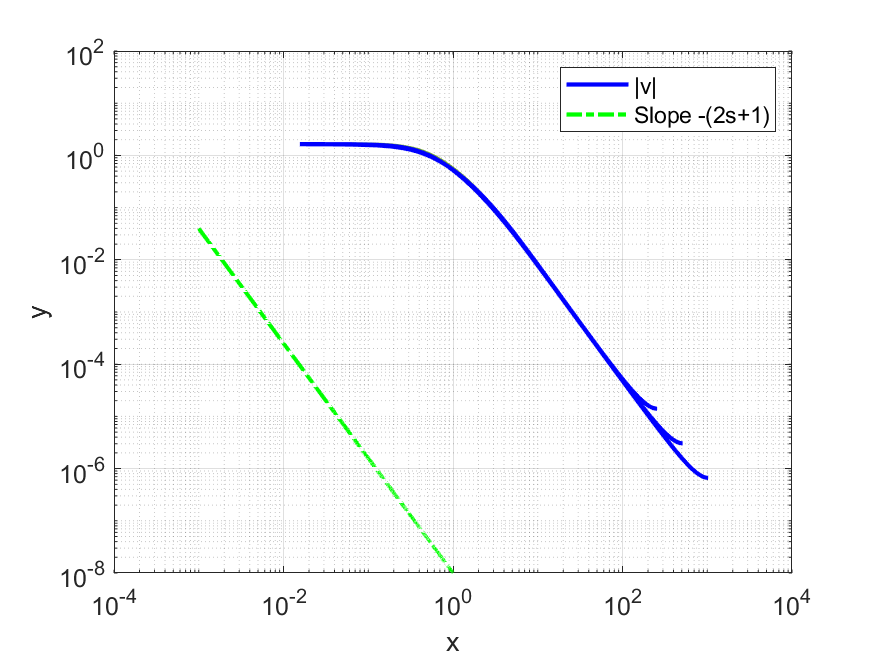}}
\caption{Approximate profiles for $\theta(x)=x^{2}, \sigma=1, s=0.6, \lambda_{0}^{1}=1,\lambda_{0}^{2}=0.25$ in log-log scale, for $L=256,512,1024$. (a) $\rho=\sqrt{v^{2}+w^{2}}$; (b) $v$. The slope of the dashed line is $-(2s+1)$.}
\label{SPAM_Fig1}
\end{figure}
\begin{figure}[htbp]
\centering
\subfigure[]
{\includegraphics[width=6cm]{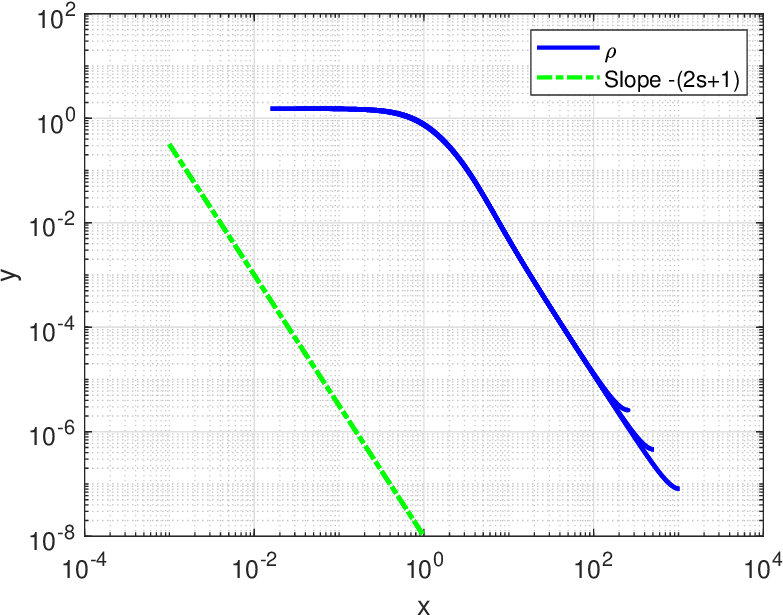}}
\subfigure[]
{\includegraphics[width=6.5cm]{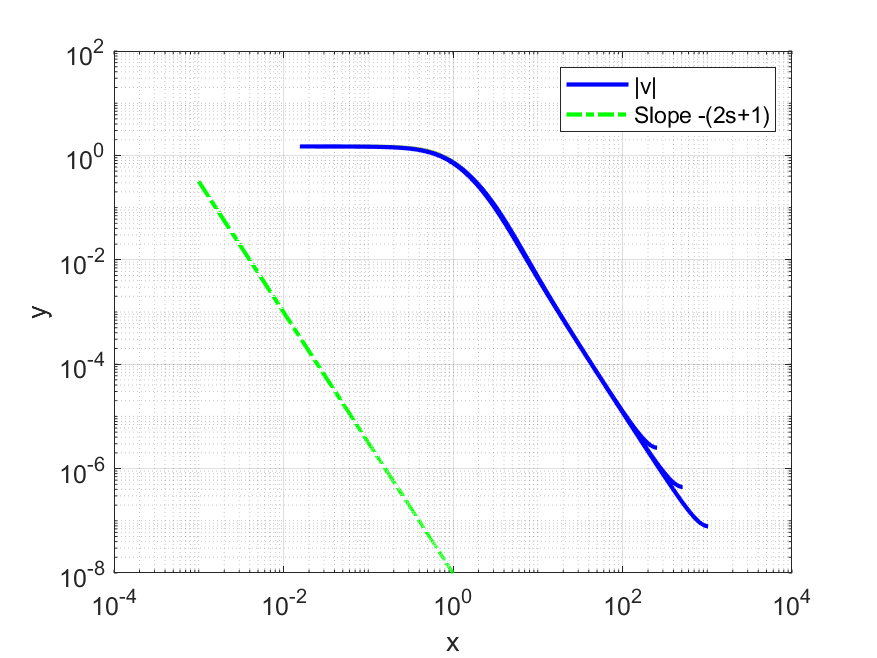}}
\caption{Approximate profiles for $\theta(x)=x^{2},\sigma=1, s=0.75, \lambda_{0}^{1}=1,\lambda_{0}^{2}=0.25$ in log-log scale, for $L=256,512,1024$. (a) $\rho=\sqrt{v^{2}+w^{2}}$; (b) $v$. The slope of the dashed line is $-(2s+1)$.}
\label{SPAM_Fig2}
\end{figure}
\begin{figure}[htbp]
\centering
\subfigure[]
{\includegraphics[width=6cm]{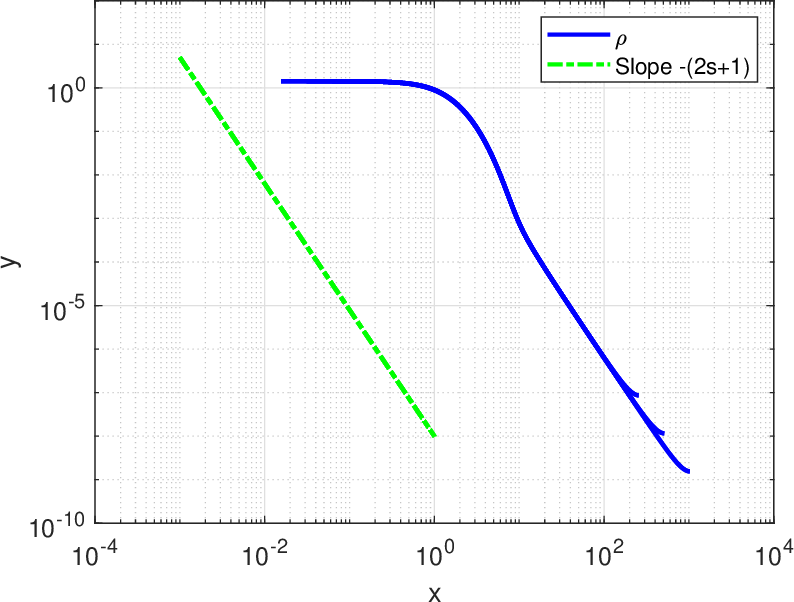}}
\subfigure[]
{\includegraphics[width=6.5cm]{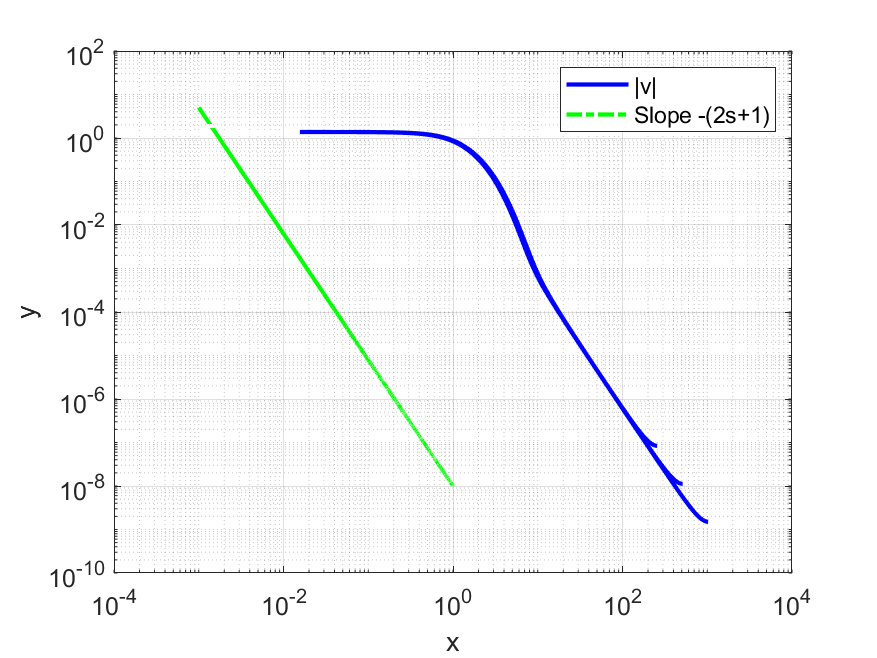}}
\caption{Approximate profiles for $\theta(x)=x^{2},\sigma=1, s=0.95, \lambda_{0}^{1}=1,\lambda_{0}^{2}=0.25$ in log-log scale, for $L=256,512,1024$. (a) $\rho=\sqrt{v^{2}+w^{2}}$; (b) $v$. The slope of the dashed line is $-(2s+1)$.}
\label{SPAM_Fig3}
\end{figure}

%

\section*{Acknowledgments}
This research has been supported by Ministerio de Ciencia e Innovaci\'on project PID2023-147073NB-I00.

%
%
%


\bigskip
%

\end{document}